\documentclass[a4paper, 10pt]{article}
\usepackage[latin1]{inputenc}
\usepackage[T1]{fontenc}      
\usepackage[english]{babel}  
\usepackage{amsmath}
\usepackage{amsthm}
\usepackage{amssymb}
\usepackage{amsfonts}
\usepackage{mathrsfs}
\usepackage{stmaryrd}
\usepackage{subeqnarray} 
\usepackage{lscape}
\usepackage{color}
\usepackage{cases}
\usepackage{dsfont}
\usepackage{lscape}
\usepackage{graphicx}
\usepackage{subfigure}
\newtheorem{Lemma}{Lemma}
\newtheorem{Proposition}{Proposition}
\newtheorem{Remark}{Remark}
\newtheorem{Theorem}{Theorem}

\newtheorem{Definition}{Definition}
\usepackage{lettrine}
\usepackage{geometry}
\bibliographystyle{alpha}
\usepackage{hyperref}
\usepackage{caption}
\usepackage{dsfont}
\hypersetup{
    colorlinks=true,                         
    linkcolor=red, 
    citecolor=blue, 
    urlcolor=green  } 

\setlength\oddsidemargin {-30pt}
\setlength\evensidemargin{-30pt}
\setlength{\textwidth}{170mm}\setlength{\textheight}{230mm}
\headheight=12.45pt

\usepackage{enumitem}
\newlist{exo}{enumerate}{1}
\setlist[exo]{label=Case \arabic* :}

\begin{document}

\title{Convergence for PDEs with an arbitrary odd order spatial derivative term}
\author{Cl\'ementine Court\`es$^1$}
\date{November 28, 2016}
\footnotetext[1]{Laboratoire de Math\'ematiques d'Orsay, Univ. Paris-Sud, CNRS, Universit\'e
Paris-Saclay, 91405 Orsay, France.
}

\maketitle

\abstract{We compute the rate of convergence of forward, backward and central finite difference $\theta$-schemes for linear PDEs with an arbitrary odd order spatial derivative term. We prove convergence of the first or second order for smooth and less smooth initial data.}
\section{Introduction}
\label{sec:1}
We study in this paper linear partial differential equations with an arbitrary odd order spatial derivative term, which read
\begin{eqnarray}
\partial_t u+\partial_x^{2p+1}u=0,
\label{EQ_BASE}
\end{eqnarray}
with $p\in \mathbb{N}$. The particular case $p=0$ corresponds to the advection equation with a unit constant speed $\partial_t u+\partial_x u=0$ and describes the passive advection of scalar field carried at constant speed. The case $p=1$ leads to Airy equation $\partial_tu+\partial_x^3 u=0$ that models the
propagation of long waves in shallow water \cite{Witham_1974} and derives from a linearization of the Korteweg-de Vries equation \cite{Craig_Goodman_1990}. We especially focus on the initial value problem where (\ref{EQ_BASE}) is considered with the initial condition $
u_{|_{t=0}}=u_0.
$
We deal with the numerical approach of this Cauchy problem and study the convergence of several finite difference schemes. Our concern here is to find a rate of convergence without assuming the smoothness of the initial data. \\
For this purpose, we use the finite difference method to discretize (\ref{EQ_BASE}) in $\mathbb{R}\times[0,T]$. We choose to deal with a uniform time and space discretization. Let $\Delta t>0$ and $\Delta x>0$ be the time and space steps, we note $t^n=n\Delta t$ for all $n\in\{0,...,N\}$ where $N=\lfloor\frac{T}{\Delta t}\rfloor$ and $x_j=j\Delta x$ for all $j\in\mathbb{Z}$. We denote by $\left(v_j^n\right)_{(j,n)}$ the discrete unknowns defined by
\begin{eqnarray}
\label{EQ_NUMERIQUE}
\left\{\begin{split}
&v_{j}^{n+1}+\theta\Delta t\left(D_{\bullet}^{2p+1}v\right)_j^{n+1}=v^n_j-(1-\theta)\Delta t\left(D_{\bullet}^{2p+1}v\right)_j^{n}, \forall (j,n)\in\mathbb{Z}\times\{0,.., N\},\\
&v_j^0=\frac{1}{\Delta x}\int_{x_j}^{x_{j+1}}u_0(y)dy, \forall j\in\mathbb{Z},
\end{split}\right.
\end{eqnarray}
with
\begin{subeqnarray}
\label{DEF_SCHEMAS}
\slabel{DEF_SCHEMAS_1}
&&\left(D_{\bullet}^{2p+1}v\right)_j^n=\left(D_+^{2p+1}v\right)_j^n=\sum_{k=0}^{2p+1}\frac{\binom{2p+1}{k}(-1)^k}{\Delta x^{2p+1}}v^n_{p-k+j+1} \textrm{\ (forward\ scheme)},\\
\slabel{DEF_SCHEMAS_2}
\hspace*{-0.4cm}\mathrm{or\ }&&\left(D_{\bullet}^{2p+1}v\right)_j^n=\left(D_-^{2p+1}v\right)_j^n=\sum_{k=0}^{2p+1}\frac{\binom{2p+1}{k}(-1)^k}{\Delta x^{2p+1}}v^n_{p-k+j} \textrm{\ (backward\ scheme)},\\
\slabel{DEF_SCHEMAS_3}
\hspace*{-0.4cm}\mathrm{or\ }&&\left(D_{\bullet}^{2p+1}v\right)_j^n=\left(D_c^{2p+1}v\right)_j^n=\frac{1}{2}\left(D_+^{2p+1}v+D_-^{2p+1}v\right)_j^n\textrm{\ (central\ scheme)}.
\end{subeqnarray}
The parameter $\theta$ belongs to $[0,1]$ and we recover the explicit scheme for $\theta=0$ and the implicit scheme for $\theta=1$.\\
\textit{Notations 1.}
We denote by $\mathbb{H}^s(\mathbb{R})$ (with $s>0$) the Sobolev space defined with the norm
$
\left| \left|u\right|\right|_{\mathbb{H}^s(\mathbb{R})}=\left(\int_{\mathbb{R}}\left(1+|\xi|^2\right)^s\left| \widehat{u}\left(\xi\right)\right|^2d\xi\right)^{\frac{1}{2}},
$
where $\widehat{u}$ is the Fourier transform of $u$. Moreover,
we use the standard $\ell^{\infty}\left(0,N; \ell^{2}_{\Delta}\left(\mathbb{Z}\right)\right)$ space whose norm is
$
\left|\left| v\right| \right|_{\ell^{\infty}\left(0,N; \ell^{2}_{\Delta}\left(\mathbb{Z}\right)\right)}=\underset{n\in\{0,..,N\}}{\mathrm{sup}}\sqrt{\sum_{j\in\mathbb{Z}}\Delta x|v_j^n|^2}.
$
Lastly, we note 
$
A\lesssim B
$
when  
$A\leq C B$ where $C$ is a constant independent of $\Delta x$ and $\Delta t$.
\section{Order of accuracy for an initial datum in $\mathbb{H}^{4p+2}(\mathbb{R})$}
\label{_Order_of_accuracy_for_a_smooth_initial_data_}
We hereafter find some condition on $\theta$, $\Delta t$ and $\Delta x$ for the schemes to be consistent and stable, to conclude the convergence study according to the Lax-Richtmyer theorem \cite{Lax_Richtmyer_1956}.
\subsection{Consistency estimate}
\label{_Consistency_estimate_}
In Sect.~\ref{_Order_of_accuracy_for_a_smooth_initial_data_}, we suppose the initial datum regular enough to compute all the needed derivatives and the Taylor expansions up to the desired order. Indeed, supposing $u_0$ regular is sufficient to ensure the same regularity for $u(t,.)$ for all $t\in[0,T]$ because of the following result.
\begin{Remark}
\label{REM_1}
Let $u$ be a solution of (\ref{EQ_BASE}), then by linearity of the equation all the derivatives of $u$ verify (\ref{EQ_BASE}) too and by Fourier transform, the $\mathbb{L}^2$--norm of all its derivatives are conserved :
$
||\partial_x^ku(t,.)||_{\mathbb{L}^2(\mathbb{R})}=||\partial_x^ku_0||_{\mathbb{L}^2(\mathbb{R})},
$
for all $k\in\mathbb{N}$. Thus, $u_0\in\mathbb{H}^{4p+2}(\mathbb{R})$ implies $u(t,.)\in\mathbb{H}^{4p+2}(\mathbb{R}) ,\ \forall t\in[0,T]$.
\end{Remark}
\begin{Definition}
\label{DEF_EPSILON} For all $(j,n)\in\mathbb{Z}\times\{0,..,N\}$, we note $\left(u_{\Delta}\right)_j^n=\frac{1}{\Delta x}\int_{x_j}^{x_{j+1}}u(t^n,y)dy$ with $u$ the exact solution of the Cauchy-problem \eqref{EQ_BASE} from $u_0$. For  all $ (j,n)\in\mathbb{Z}\times\{0,.., N\}$, the consistency error is defined as
\begin{eqnarray*}
\epsilon_j^n=\frac{\left(u_{\Delta}\right)_{j}^{n+1}-\left(u_{\Delta}\right)^n_j}{\Delta t}+\theta\left(D_{\bullet}^{2p+1}u_{\Delta}\right)_j^{n+1}+(1-\theta)\left(D_{\bullet}^{2p+1}u_{\Delta}\right)_j^{n},
\end{eqnarray*}
with $D_{\bullet}^{2p+1}$ defined by (\ref{DEF_SCHEMAS_1})--(\ref{DEF_SCHEMAS_3}).
\end{Definition}
\begin{Proposition}
Assume $u_0\in\mathbb{H}^{4p+2}(\mathbb{R})$ (and $u_0\in\mathbb{H}^{6p+3}(\mathbb{R})$ if $\theta=\frac{1}{2}$) then, for the forward or backward finite difference schemes (\ref{DEF_SCHEMAS_1}) and (\ref{DEF_SCHEMAS_2}), the following consistency inequality holds
\begin{small}
\begin{eqnarray*}
\left|\left|\epsilon\right|\right|_{\ell^{\infty}\left(0,N; \ell^{2}_{\Delta}\left(\mathbb{Z}\right)\right)} \lesssim \Delta t\left|\frac{1}{2}-\theta\right|||\partial_x^{4p+2}u_0||_{\mathbb{L}^2(\mathbb{R})}+\Delta x||\partial_x^{2p+2}u_0||_{\mathbb{L}^2(\mathbb{R})}+\Delta t^2\left|\left|\partial_x^{6p+3}u_0\right|\right|_{\mathbb{L}^2(\mathbb{R})}.
\end{eqnarray*}
\end{small}
For the central finite difference scheme (\ref{DEF_SCHEMAS_3}), the consistency inequality is as follows
\begin{small}
\begin{eqnarray*}
\left|\left|\epsilon\right|\right|_{\ell^{\infty}\left(0,N; \ell^{2}_{\Delta}\left(\mathbb{Z}\right)\right)} \lesssim \Delta t\left|\frac{1}{2}-\theta\right|||\partial_x^{4p+2}u_0||_{\mathbb{L}^2(\mathbb{R})}+\Delta x^2||\partial_x^{2p+3}u_0||_{\mathbb{L}^2(\mathbb{R})}+\Delta t^2\left|\left|\partial_x^{6p+3}u_0\right|\right|_{\mathbb{L}^2(\mathbb{R})}.
\end{eqnarray*}
\end{small}
\label{PROP_1}
\end{Proposition}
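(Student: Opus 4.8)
The plan is to estimate $\epsilon_j^n$ by separating the time-discretization error from the space-discretization error and then using Remark~\ref{REM_1} together with Parseval's identity to pass to $\mathbb{L}^2$-norms of derivatives of $u_0$. Write $\epsilon_j^n = A_j^n + \theta B_j^{n+1} + (1-\theta) B_j^n$, where
\[
A_j^n = \frac{(u_\Delta)_j^{n+1}-(u_\Delta)_j^n}{\Delta t} + \partial_x^{2p+1}(u_\Delta)_j^{\tfrac{n+1/2}{}}\quad\text{(a symmetric time remainder)},
\]
and $B_j^n = (D_\bullet^{2p+1}u_\Delta)_j^n - \partial_x^{2p+1}(u_\Delta)_j^n$ is the spatial truncation error of the finite-difference operator. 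The combination $\theta B^{n+1}+(1-\theta)B^n$ should be re-centered around time $t^{n+1/2}$ so that the $\theta$-weighted average of the time-derivative terms produced by Taylor expansion in $t$ telescopes with $A_j^n$, leaving a factor $|\tfrac12-\theta|$ in front of the leading time error and an $O(\Delta t^2)$ remainder otherwise. This is the standard trick that makes the Crank–Nicolson case ($\theta=\tfrac12$) second order in time, at the price of one extra time derivative, hence the $\mathbb{H}^{6p+3}$ hypothesis when $\theta=\tfrac12$ (three time derivatives $=3(2p+1)=6p+3$ space derivatives via the equation).

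For the \textbf{time part} $A_j^n$: Taylor-expand $(u_\Delta)_j^{n+1}$ and $(u_\Delta)_j^n$ about $t^{n+1/2}$; the first-order terms cancel, and since $\partial_t u_\Delta = -\partial_x^{2p+1}u_\Delta$ (the cell-average commutes with $\partial_t$ and with $\partial_x^{2p+1}$), one gets $A_j^n = \Delta t\,(\tfrac12-\theta)\,\partial_t^2 (u_\Delta)_j^{n+1/2} + O(\Delta t^2\,\partial_t^3 u_\Delta)$ after combining with the $\theta$-averaged terms. Using the equation twice, $\partial_t^2 u = \partial_x^{4p+2}u$ and $\partial_t^3 u = -\partial_x^{6p+3}u$. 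For the \textbf{space part} $B_j^n$: here I would work in Fourier variable. The symbol of $D_+^{2p+1}$ applied to a function and compared to $(i\xi)^{2p+1}$ is, by the binomial formula, $\frac{1}{\Delta x^{2p+1}}e^{-ip\xi\Delta x}(e^{i\xi\Delta x}-1)^{2p+1}$, whose Taylor expansion in $\xi\Delta x$ agrees with $(i\xi)^{2p+1}$ to the appropriate order; the leading error term for forward/backward is $O(\Delta x\,\xi^{2p+2})$, while for the central scheme the odd-parity cancellation improves it to $O(\Delta x^2\,\xi^{2p+3})$. Additionally the cell-average operator contributes its own $O(\Delta x\,\xi)$ factor through $\frac{1}{\Delta x}\int_{x_j}^{x_{j+1}}$, i.e. multiplier $\frac{e^{i\xi\Delta x}-1}{i\xi\Delta x}$; one must check this does not degrade the central-scheme order — in fact the relevant quantity is the sampled value of a smooth function so its cell average differs by $O(\Delta x^2)$ and this is absorbed. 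Then $\|B^n\|_{\ell^2_\Delta}^2 \lesssim \sum_j \Delta x |B_j^n|^2$ is bounded, via Parseval on $\mathbb{R}$ after recognizing the $\ell^2_\Delta$ norm of the sampled/averaged sequence as (comparable to) an $\mathbb{L}^2$ norm on a strip of frequencies, by $\Delta x^2 \|\partial_x^{2p+2}u(t^n,\cdot)\|_{\mathbb{L}^2}^2$ (resp. $\Delta x^4\|\partial_x^{2p+3}u\|^2$), and then by Remark~\ref{REM_1} replaced with the same norm of $u_0$.

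Concretely I would carry out the steps in this order: (i) fix $n$ and set $w = u(t^{n+1/2},\cdot)$, reduce everything to a statement about $w$ by the time-Taylor expansion, isolating the $\Delta t|\tfrac12-\theta|\partial_t^2$ and $\Delta t^2\partial_t^3$ contributions; (ii) convert $\partial_t^2,\partial_t^3$ into $\partial_x^{4p+2},\partial_x^{6p+3}$ using the PDE; (iii) handle the spatial truncation error $B$ by the Fourier-multiplier computation above, getting the $\Delta x$ (resp.\ $\Delta x^2$) factor and the extra derivative; (iv) take $\ell^2_\Delta$ norms and use Parseval to land on $\mathbb{L}^2(\mathbb{R})$-norms of $x$-derivatives of $u(t^n,\cdot)$ or $u(t^{n+1/2},\cdot)$; (v) invoke Remark~\ref{REM_1} to replace every $u(t,\cdot)$-norm by the corresponding $u_0$-norm, uniformly in $n$, and take the sup over $n$. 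The main obstacle is step (iii)–(iv): getting the \emph{sharp} power of $\Delta x$ (especially the $\Delta x^2$ for the central scheme, which requires carefully tracking that the cell-averaging operator's own error is even in $\xi$ and therefore does not spoil the second-order cancellation) and doing the passage from the discrete $\ell^2_\Delta$ sum to a genuine $\mathbb{L}^2(\mathbb{R})$ integral cleanly — this is where one needs an estimate like $\sum_j \Delta x |(\varphi_\Delta)_j|^2 \lesssim \|\varphi\|_{\mathbb{L}^2(\mathbb{R})}^2$ for the cell-average of a function $\varphi$, which follows from Jensen/Cauchy–Schwarz but must be stated. Everything else is bookkeeping of Taylor remainders, for which one keeps the integral form of the remainder and bounds it by a top-order derivative in $\mathbb{L}^2$.
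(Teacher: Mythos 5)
Your proposal is correct, and it reaches the stated bounds. The time part is essentially the paper's own argument: Taylor expansion in time (the paper expands about $t^n$ rather than $t^{n+1/2}$, which is only a bookkeeping difference), use of $\partial_t u=-\partial_x^{2p+1}u$ to convert $\partial_t^2$ and $\partial_t^3$ into $\partial_x^{4p+2}$ and $\partial_x^{6p+3}$, and cancellation producing the $\left|\frac12-\theta\right|$ prefactor; note that the paper's remainders also involve $\partial_t^2\partial_x^{2p+1}u=\partial_x^{6p+3}u$ for every $\theta$, so your handling of the $\theta\neq\frac12$ case is no looser than the original. Where you genuinely diverge is the spatial truncation error. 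The paper Taylor-expands $u(t^n,y+(p-k+1)\Delta x)$ in real space up to order $2p+2$ and invokes Lemma~\ref{LEMMA_1} (the divided-difference identity for $\sum_k\binom{2p+1}{k}(-1)^k(p-k+1)^{\ell}$) to annihilate all terms of order $\ell<2p+1$ and reproduce $\partial_x^{2p+1}$ exactly at order $2p+1$, keeping an integral-form remainder $R_\bullet$ bounded by $\Delta x\,\|\partial_x^{2p+2}u(t^n,\cdot)\|_{\mathbb{L}^2}$ via Cauchy--Schwarz. You instead compute the exact symbol $e^{i\xi\Delta x/2}\bigl(2i\sin(\xi\Delta x/2)\bigr)^{2p+1}/\Delta x^{2p+1}$ --- which is precisely the content of the paper's Lemma~\ref{LEMMA_2}, used there only for stability --- and compare it to $(i\xi)^{2p+1}$ by Plancherel. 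This buys a unified treatment of consistency and stability around one symbol and makes the central scheme's parity gain transparent, but it obliges you to verify the multiplier bound $\bigl|m_{\Delta x}(\xi)-(i\xi)^{2p+1}\bigr|\lesssim\Delta x\,|\xi|^{2p+2}$ \emph{uniformly} in $\xi\in\mathbb{R}$ (immediate for $|\xi\Delta x|\geq1$ since each term is separately $\lesssim\Delta x|\xi|^{2p+2}$ there, and by Taylor otherwise), a step you should state. Finally, your worry about the cell-averaging multiplier is moot: the quantity you compare against is itself the cell average of $\partial_x^{2p+1}u$, so the averaging operator factors out of the difference, and the one-line Jensen bound $\sum_j\Delta x\,|(\varphi_\Delta)_j|^2\leq\|\varphi\|_{\mathbb{L}^2(\mathbb{R})}^2$ (which the paper uses implicitly) closes the passage from $\ell^2_{\Delta}$ to $\mathbb{L}^2(\mathbb{R})$ --- no frequency-strip comparison is needed.
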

Before proving the previous result, we state a useful lemma.
\begin{Lemma}
For all $\ell$ and $p$ in $\mathbb{N}$, there exists $\xi\in]-p, p+1[$ such that
\begin{small}
\begin{eqnarray*}
\sum_{k=0}^{2p+1}\binom{2p+1}{k}(-1)^k(p-k+1)^{\ell}=\left\{\begin{split}
&0 &\mathrm{\ if\ }\ell<2p+1,\\
&\ell ! &\mathrm{\ if\ }\ell=2p+1,\\
&\frac{\ell !}{(\ell-2p-1)!}\xi^{\ell-2p-1} &\mathrm{\ if\ }\ell>2p+1.
\end{split}
\right.
\label{eq_gl}
\end{eqnarray*}
\end{small}
\label{LEMMA_1}
\end{Lemma}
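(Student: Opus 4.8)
The plan is to recognize the alternating binomial sum as the $(2p+1)$-th finite difference of a single polynomial, and then to read off the three cases from the elementary behaviour of finite differences on polynomials, using the mean value theorem for divided differences to produce the point $\xi$ in the last case. I would introduce the polynomial $h(k)=(p+1-k)^{\ell}=(p-k+1)^{\ell}$, regarded as a function of the real variable $k$; it has degree $\ell$, leading coefficient $(-1)^{\ell}$, and $h^{(m)}(k)=(-1)^{m}\frac{\ell!}{(\ell-m)!}(p+1-k)^{\ell-m}$ for $m\le\ell$. With the forward difference operator $\Delta\phi(k)=\phi(k+1)-\phi(k)$ one has the classical identity $\Delta^{n}h(0)=\sum_{k=0}^{n}\binom{n}{k}(-1)^{n-k}h(k)$, and since $n=2p+1$ is odd this rewrites the quantity of interest as
\[
\sum_{k=0}^{2p+1}\binom{2p+1}{k}(-1)^{k}(p-k+1)^{\ell}=-\,\Delta^{2p+1}h(0).
\]

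First I would dispose of the two purely algebraic cases. If $\ell<2p+1$, then $h$ is a polynomial of degree strictly less than $2p+1$, so $\Delta^{2p+1}h\equiv 0$ and the sum vanishes. If $\ell=2p+1$, then $\Delta^{2p+1}$ applied to a polynomial of degree exactly $2p+1$ returns the constant $(2p+1)!$ times the leading coefficient, i.e. $(-1)^{2p+1}(2p+1)!=-(2p+1)!$; hence the sum equals $(2p+1)!=\ell!$. Note that this value is already the one predicted by the third formula at $\ell=2p+1$, the exponent of $\xi$ then being $0$.

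For $\ell>2p+1$ I would invoke the mean value theorem for finite differences: since $h\in C^{\infty}$, writing $\Delta^{2p+1}h(0)=(2p+1)!\,h[0,1,\dots,2p+1]$ as a divided difference on the equally spaced nodes $0,1,\dots,2p+1$ yields a $\zeta\in\,]0,2p+1[$ with $\Delta^{2p+1}h(0)=h^{(2p+1)}(\zeta)$. Substituting the explicit formula for $h^{(2p+1)}$ gives
\[
\sum_{k=0}^{2p+1}\binom{2p+1}{k}(-1)^{k}(p-k+1)^{\ell}=-\,h^{(2p+1)}(\zeta)=\frac{\ell!}{(\ell-2p-1)!}\,(p+1-\zeta)^{\ell-2p-1},
\]
and setting $\xi=p+1-\zeta$ produces exactly the stated form with $\xi\in\,]-p,p+1[$, since $\zeta$ ranges over $]0,2p+1[$.

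The only delicate point worth stating carefully is the mean value theorem for divided differences — equivalently, the iterated use of Rolle's theorem on the interpolation error of $h$ at $0,1,\dots,2p+1$ — together with the bookkeeping of the sign $(-1)^{2p+1}=-1$; identifying the alternating binomial sum with $\Delta^{2p+1}$ is routine. A fully elementary alternative avoiding the divided-difference machinery would be an induction on $\ell$ at fixed $p$ using Pascal's recursion for $\Delta$, but the interpolation argument is what localizes $\xi$ most transparently.
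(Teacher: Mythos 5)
Your proof is correct and follows essentially the same route as the paper: both identify the alternating binomial sum with a $(2p+1)$-st order divided/finite difference and invoke the mean value theorem for divided differences to produce $\xi$. The only cosmetic difference is that you work with the forward difference of $h(k)=(p+1-k)^{\ell}$ on the nodes $0,\dots,2p+1$ and then substitute $\xi=p+1-\zeta$, whereas the paper applies the divided-difference formula directly to $f(y)=y^{\ell}$ on the nodes $-p,\dots,p+1$.
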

\begin{proof}[Proof of Lemma~\ref{LEMMA_1}]
 
Let $(x_{j-p}, ..., x_{j+p+1})$ be $2p+2$ points regularly spaced of $h$, we recall the divided difference of order $2p+2$ of a smooth function $f$ :

\begin{equation}
(2p+1)!f[x_{j-p}, ..., x_{j+p+1}]=\frac{\sum_{k=0}^{2p+1}\binom{2p+1}{k}(-1)^kf(x_{p-k+j+1})}{h^{2p+1}}.
\label{EQ_diff_Divisees}
\end{equation}
Moreover, we recall the existence of $\xi\in]\mathrm{min}(x_{j-p}, ..., x_{j+p+1}), \mathrm{max}(x_{j-p}, ..., x_{j+p+1})[$ such as 
\begin{equation*}
(2p+1)!f[x_{j-p}, ..., x_{j+p+1}]=f^{(2p+1)}(\xi).
\end{equation*}
For more details, please refer to \cite{Demailly_Livre}. Lemma~\ref{LEMMA_1} is a consequence of the two previous equations with $f:y\mapsto y^{\ell}$, $h=1$, $j=0$ and $x_i=i$ for $i\in\mathbb{Z}$.
 
\end{proof}

\begin{proof}[Proof of Proposition~\ref{PROP_1}]
 
For $u_0\in\mathbb{H}^{4p+2}\left(\mathbb{R}\right)$ and for the forward finite difference scheme, one has
\begin{small}
\begin{eqnarray*}
\left(D_{\bullet}^{2p+1}u_{\Delta}\right)_j^{n}=\left(D_{+}^{2p+1}u_{\Delta}\right)_j^{n}=\sum_{k=0}^{2p+1}\frac{\binom{2p+1}{k}(-1)^k}{\Delta x^{2p+1}}\left(\frac{1}{\Delta x}\int_{x_j}^{x_{j+1}}u(t^{n},y+(p-k+1)\Delta x)dy \right).
\end{eqnarray*}
\end{small}
Using a Taylor expansion (in space) up to order $2p+2$ and exchanging the two sums inside leads to
\begin{small}
\begin{eqnarray}
\left(D_{+}^{2p+1}u_{\Delta}\right)_j^{n}=\frac{1}{\Delta x}\int_{x_j}^{x_{j+1}}\sum_{\ell=0}^{2p+1}\frac{\partial_x^{\ell} u(t^n,y)}{\ell !}\frac{\left(\sum_{k=0}^{2p+1}\binom{2p+1}{k}(-1)^k(p-k+1)^{\ell}\Delta x^{\ell}\right)}{\Delta x^{2p+1}}dy+\left(R_+\right)_j^n,
\label{EQ_avec_R}
\end{eqnarray}
\end{small}
where
\begin{small}$$\left(R_+\right)_j^n=\frac{1}{\Delta x}\int_{x_j}^{x_{j+1}}\int_{y}^{y+(p-k+1)\Delta x}\frac{\partial_x^{2p+2}u(t^n,z)}{(2p+1)!}\left(\frac{\sum_{k=0}^{2p+1}\binom{2p+1}{k}(-1)^k(y+(p-k+1)\Delta x-z)^{2p+1}}{\Delta x^{2p+1}}\right)dzdy.$$\end{small}
For simplicity, we will only use $||R_+^n||_{\ell^2_{\Delta}}\lesssim\Delta x||\partial_x^{2p+2}u(t^n,.)||_{\mathbb{L}^2(\mathbb{R})}. $
Equation~(\ref{EQ_avec_R}) is simplified thanks to Lemma~\ref{LEMMA_1}.
Eventually, we obtain
\begin{small}
$
\left(D_{+}^{2p+1}u_{\Delta}\right)_j^{n}=\frac{1}{\Delta x}\int_{x_j}^{x_{j+1}}\partial_x^{2p+1}u(t^n,y)dy+\left(R_+\right)_j^n.
$
\end{small}
Similarly, by adapting the previous computation, one has
\begin{small}
\begin{subeqnarray*}
\left(D_{+}^{2p+1}u_{\Delta}\right)_j^{n+1}=\frac{1}{\Delta x}\left(\int_{x_j}^{x_{j+1}}\partial_x^{2p+1}\right.&&u(t^n,y)dy+\int_{x_j}^{x_{j+1}}\Delta t\partial_t\partial_x^{2p+1}u(t^n,y)dy\\
&&\left.+\int_{x_j}^{x_{j+1}}\int_{t^n}^{t^{n+1}}\partial_t^2\partial_x^{2p+1}u(s,y)(t^{n+1}-s)dsdy\right)+\left(R_+\right)_j^{n+1}.
\end{subeqnarray*}
\end{small}
In order to compute the difference $\frac{\left(u_{\Delta}\right)_{j}^{n+1}-\left(u_{\Delta}\right)^n_j}{\Delta t}$ that appears in Definition~\ref{DEF_EPSILON}, we perform a Taylor expansion (in time) up to order 3.
 Gathering all those results together yields 
 \begin{small}
 \begin{subeqnarray*}
\epsilon_j^n &&=\frac{\Delta t}{2\Delta x}\int_{x_j}^{x_{j+1}}\partial_t^2 u(t^n,x)dx+\frac{1}{\Delta t\Delta x}\int_{x_j}^{x_{j+1}}\int_{t^n}^{t^{n+1}}\partial_t^3u(s,y)\frac{(t^{n+1}-s)^2}{2}dsdy\\&&+\frac{\theta \Delta t}{\Delta x}\int_{x_j}^{x_{j+1}}\partial_t\partial_x^{2p+1}u(t^n,y)dy+\frac{\theta}{\Delta x}\int_{x_j}^{x_{j+1}}\int_{t^n}^{t^{n+1}}\partial_t^2\partial_x^{2p+1}u(s,y)(t^{n+1}-s)dsdy\\&&+\theta \left(R_+\right)_j^{n+1}+(1-\theta)\left(R_+\right)_j^n.
 \end{subeqnarray*}
 \end{small}
The conclusion comes from the relation $\partial_tu(t,x) = -\partial_x^{2p+1}u(t,x)$, the Cauchy-Schwarz inequality and the conservation of the $\mathbb{L}^2$-norm (cf. Remark~\ref{REM_1}).

\end{proof}
\begin{Remark}
The regularity $\mathbb{H}^{4p+2}(\mathbb{R})$ (or $\mathbb{H}^{6p+3}(\mathbb{R})$ if $\theta=\frac{1}{2}$) comes from the Taylor expansion in time and is essential in this proof.
\label{REM_2_2_22}
\end{Remark}
\begin{Remark} We follow exactly the same guidelines for the backward finite difference scheme. For the central finite difference scheme, we need to perform a Taylor expansion in space up to order $2p+3$ to obtain \begin{small}
$
\left(D_{c}^{2p+1}u_{\Delta}\right)_j^{n}=\frac{1}{\Delta x}\int_{x_j}^{x_{j+1}}\partial_x^{2p+1}u(t^n,y)dy+\left(R_c\right)_j^n,
$
\end{small}
with 
$
||R_c^n||_{\ell^2_\Delta}\lesssim \Delta x^2||\partial_x^{2p+3}u_0||_{\mathbb{L}^2(\mathbb{R})}.
$
\end{Remark}
\subsection{Stability}
\label{_Stability_}
We note, for all $\left(v_j^n\right)_{j\in\mathbb{Z}}$ and $\xi\in[0,1]$, $\widehat{V^n}(\xi)=\sum_{k\in\mathbb{Z}}v_k^ne^{2i \pi k\xi}$ in $\mathbb{L}^2([0,1])$ with the equivalence of the norms : $\sum_{j\in\mathbb{Z}}\Delta x|v_j^n|^2 = \Delta x\int_0^1\left|\widehat{V^n}\left(\xi\right)\right|^2d\xi.$ Eventually, we define the shift operator $\mathcal{S}^{\ell}$ by $\mathcal{S}^{\ell}v^n=(v_{j+\ell}^n)_{j\in\mathbb{Z}}$ thus, $\widehat{\mathcal{S}^{\ell}V^n}=e^{-2i \pi \ell \xi}\widehat{V^n}$.  
\begin{Definition}
A scheme is said to be stable in $\ell^2_{\Delta}(\mathbb{Z})$, if there exists a constant $C$ independent of $\Delta t$ and $\Delta x$ such that, for $\left(v_j^n\right)_{(j,n)}$ verifying Equation~(\ref{EQ_NUMERIQUE}),
\begin{equation*}
\left| \left|v^{n+1}\right|\right|_{\ell^2_{\Delta}(\mathbb{Z})}\leq \left(1+C \Delta t\right)\left| \left|v^{n}\right|\right|_{\ell^2_{\Delta}(\mathbb{Z})}, \ \forall n\in\{0,...,N\}.
\end{equation*}
\end{Definition}
\begin{Proposition}
For small $\Delta t$ and $\Delta x$, the stability under the Courant-Friedrich-Lewy condition (in short CFL cond.) is explained in Table~\ref{TABLE_STABLE-TTable}.
\begin{table}[h]
\begin{center}
\begin{tabular}{p{1.5cm}p{3.5cm}p{3.5cm}p{3.5cm}}
\hline\noalign{\smallskip}
& \multicolumn{2}{c}{\hspace*{-1cm}$\overbrace{\hspace*{6cm}}^{p \mathrm{\ even}}$} &$p$ odd\\
& $p=0$ & $p\neq 0$&  \\
& (Advection) & & \\
\hline\noalign{\smallskip}
&&&\\
Forward & stable under the CFL cond.& unconditionally unstable&stable under the CFL cond.\\
scheme &$\Delta t(1-2\theta)\leq -\Delta x$&& $\Delta t(1-2\theta)\leq \frac{\Delta x^{2p+1}}{2^{2p}}$\\
&&&\\
Backward & stable under the CFL cond.& stable under the CFL cond.&unconditionally unstable\\
scheme &$\Delta t(1-2\theta)\leq \Delta x$&$\Delta t(1-2\theta)\leq \frac{\Delta x^{2p+1}}{2^{2p}}$&\\
&&&\\
Central & stable under the CFL cond.& stable under the CFL cond.&stable under the CFL cond.\\
scheme &$\Delta t(1-2\theta)\leq 2C \Delta x^2$&$\Delta t(1-2\theta)\leq 2C\frac{\Delta x^{4p+2}}{2^{4p}}$&$\Delta t(1-2\theta)\leq 2C\frac{\Delta x^{4p+2}}{2^{4p}}$\\
\noalign{\smallskip}\hline\noalign{\smallskip}
\end{tabular}
\caption{Stability results for finite difference $\theta$-schemes}
\label{TABLE_STABLE-TTable}       
\end{center}
\end{table}
\label{PROP_STABILITY}
\end{Proposition}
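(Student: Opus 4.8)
The natural tool here is von Neumann (Fourier) stability analysis, so the plan is to diagonalise \eqref{EQ_NUMERIQUE} by the discrete Fourier transform $v^{n}\mapsto\widehat{V^{n}}$ and to estimate the resulting amplification factor. First I would compute the symbols of the operators $D_{\bullet}^{2p+1}$: using $\widehat{\mathcal{S}^{\ell}V^{n}}=e^{-2i\pi\ell\xi}\widehat{V^{n}}$ together with $\sum_{k=0}^{2p+1}\binom{2p+1}{k}(-1)^{k}e^{2i\pi k\xi}=(1-e^{2i\pi\xi})^{2p+1}$ and the factorisation $1-e^{2i\pi\xi}=-2i\,e^{i\pi\xi}\sin(\pi\xi)$, the operators become multiplication by
\begin{eqnarray*}
\widehat{D_{+}^{2p+1}}(\xi)=\frac{(-1)^{p+1}2^{2p+1}i}{\Delta x^{2p+1}}e^{-i\pi\xi}\sin^{2p+1}(\pi\xi),\qquad\widehat{D_{-}^{2p+1}}(\xi)=\frac{(-1)^{p+1}2^{2p+1}i}{\Delta x^{2p+1}}e^{i\pi\xi}\sin^{2p+1}(\pi\xi),
\end{eqnarray*}
and $\widehat{D_{c}^{2p+1}}(\xi)=\frac{(-1)^{p+1}2^{2p+1}i}{\Delta x^{2p+1}}\sin^{2p+1}(\pi\xi)\cos(\pi\xi)$. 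The scheme then reads $\widehat{V^{n+1}}=\mathcal{A}_{\bullet}(\xi)\widehat{V^{n}}$ with amplification factor $\mathcal{A}_{\bullet}(\xi)=\bigl(1-(1-\theta)\Delta t\,\widehat{D_{\bullet}^{2p+1}}(\xi)\bigr)\big/\bigl(1+\theta\Delta t\,\widehat{D_{\bullet}^{2p+1}}(\xi)\bigr)$, and by Parseval's identity stability is exactly $\sup_{\xi\in[0,1]}|\mathcal{A}_{\bullet}(\xi)|\le 1+C\Delta t$.

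The second ingredient is the elementary identity, valid for every $z\in\mathbb{C}$ with $1+\theta z\neq 0$,
\begin{eqnarray*}
\left|\frac{1-(1-\theta)z}{1+\theta z}\right|^{2}-1=\frac{(1-2\theta)|z|^{2}-2\,\mathrm{Re}(z)}{|1+\theta z|^{2}}.
\end{eqnarray*}
Taking $z=\Delta t\,\widehat{D_{\bullet}^{2p+1}}(\xi)$ reduces everything to the sign and size of $(1-2\theta)\Delta t\,|\widehat{D_{\bullet}^{2p+1}}|^{2}-2\,\mathrm{Re}(\widehat{D_{\bullet}^{2p+1}})$. From the closed forms one reads off $\mathrm{Re}(\widehat{D_{+}^{2p+1}})=(-1)^{p+1}\tfrac{2^{2p+1}}{\Delta x^{2p+1}}\sin^{2p+2}(\pi\xi)$, $\mathrm{Re}(\widehat{D_{-}^{2p+1}})=(-1)^{p}\tfrac{2^{2p+1}}{\Delta x^{2p+1}}\sin^{2p+2}(\pi\xi)$, $\mathrm{Re}(\widehat{D_{c}^{2p+1}})=0$, whereas $|\widehat{D_{\pm}^{2p+1}}|^{2}=\tfrac{2^{4p+2}}{\Delta x^{4p+2}}\sin^{4p+2}(\pi\xi)$ and $|\widehat{D_{c}^{2p+1}}|^{2}=\tfrac{2^{4p+2}}{\Delta x^{4p+2}}\sin^{4p+2}(\pi\xi)\cos^{2}(\pi\xi)$. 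Thus the sign of $\mathrm{Re}(\widehat{D_{\bullet}^{2p+1}})$ depends only on the parity of $p$ and on the scheme: it is $\ge 0$ for the forward scheme with $p$ odd and the backward scheme with $p$ even, $\le 0$ for the forward scheme with $p$ even and the backward scheme with $p$ odd, and $\equiv 0$ for the central scheme; this is precisely the trichotomy of Table~\ref{TABLE_STABLE-TTable}.

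In the favourable cases ($\mathrm{Re}(\widehat{D_{\bullet}^{2p+1}})\ge 0$, or the central scheme) I would use $|1+\theta z|^{2}\ge 1$, so that it suffices to ensure $(1-2\theta)\Delta t\,|\widehat{D_{\bullet}^{2p+1}}|^{2}\le 2\,\mathrm{Re}(\widehat{D_{\bullet}^{2p+1}})$ for all $\xi$, which then even gives $|\mathcal{A}_{\bullet}|\le 1$; this holds trivially when $\theta\ge\tfrac12$, and otherwise — after cancelling the common factor $\sin^{2p+2}(\pi\xi)$ and using $\sin^{2p}(\pi\xi)\le 1$ for the forward and backward schemes, respectively bounding $|\widehat{D_{c}^{2p+1}}|^{2}\le\tfrac{2^{4p+2}}{\Delta x^{4p+2}}\bigl(\max_{\xi}|\sin(\pi\xi)|^{2p+1}|\cos(\pi\xi)|\bigr)^{2}$ for the central one — it becomes exactly the CFL condition $\Delta t(1-2\theta)\le\Delta x^{2p+1}/2^{2p}$, resp. $\Delta t(1-2\theta)\le 2C\Delta x^{4p+2}/2^{4p}$ for a suitable constant $C$. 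In the unfavourable cases with $p\ge 1$ ($\mathrm{Re}(\widehat{D_{\bullet}^{2p+1}})\le 0$, not $\equiv 0$) I would prove unconditional instability: along the refinement $\Delta t=\Delta x^{2p+2}$ and at any fixed $\xi$ with $0<|\sin(\pi\xi)|<1$ one has $z=\Delta t\,\widehat{D_{\bullet}^{2p+1}}(\xi)\to 0$, hence $\mathcal{A}_{\bullet}(\xi)=1-z+O(z^{2})$ with $|z|^{2}=o(|\mathrm{Re}(z)|)$, so $|\mathcal{A}_{\bullet}(\xi)|^{2}=1-2\,\mathrm{Re}(z)+O(|z|^{2})=1+\Delta t\,K$ with $K=2^{2p+2}\sin^{2p+2}(\pi\xi)/\Delta x^{2p+1}\to+\infty$, and no constant $C$ independent of $\Delta x$ can absorb this. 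The advection case $p=0$ of the forward scheme has to be singled out: there $|\widehat{D_{+}^{1}}|^{2}$ and $\mathrm{Re}(\widehat{D_{+}^{1}})$ both carry the single power $\sin^{2}(\pi\xi)$, so the two competing terms can be cancelled at all frequencies at once, which is why the reverse CFL condition $\Delta t(1-2\theta)\le-\Delta x$ still yields $|\mathcal{A}_{+}|\le 1$ despite the adverse sign.

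The step I expect to be the main obstacle is the instability half of the statement. Establishing "unconditionally unstable" rigorously for every $\theta\in[0,1]$, and not only for the explicit scheme, forces one to let the bad frequency and the ratio $\Delta t/\Delta x^{2p+1}$ degenerate together — at a single worst frequency such as $\xi=\tfrac12$ the implicit part of the scheme can make $|\mathcal{A}_{\bullet}|$ small, so no fixed frequency works once $\theta>0$ — and to rule out every conceivable CFL-type relation; this is exactly where the $p=0$ versus $p\ge 1$ dichotomy, i.e. the matching versus non-matching powers of $\sin(\pi\xi)$ in $\mathrm{Re}(\widehat{D_{\bullet}^{2p+1}})$ and $|\widehat{D_{\bullet}^{2p+1}}|^{2}$, must be invoked. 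Minor technical points are to treat the degenerate frequencies $\sin(\pi\xi)=0$ separately before any division, and to identify the constant $C$ in the central-scheme condition as a fixed multiple of $\bigl(\max_{\xi\in[0,1]}|\sin(\pi\xi)|^{2p+1}|\cos(\pi\xi)|\bigr)^{-2}$.
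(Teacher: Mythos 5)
Your proposal is correct and follows essentially the same route as the paper: a von Neumann analysis in which the discrete Fourier transform turns the scheme into multiplication by an amplification factor, the trigonometric identity for $\sum_k\binom{2p+1}{k}(-1)^ke^{-2i\pi(p-k+1)\xi}$ (the paper's Lemma~\ref{LEMMA_2}, which you rederive via the binomial theorem) gives its closed form, and the CFL conditions and the parity dichotomy are read off from the sign of $(1-2\theta)|z|^2-2\,\mathrm{Re}(z)$. Your treatment is in fact somewhat more explicit than the paper's (which defers the case-by-case inequalities to a reference), and your discussion of the ``unconditionally unstable'' cases is more careful than the paper's one-line argument that the sufficient condition $|\mathcal{A}_\bullet|\le 1$ cannot hold.
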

The following computation will simplify the proof of Proposition~\ref{PROP_STABILITY}.
\begin{Lemma}
One has, for all $\xi \in [0,1],$ \begin{center}
$
\sum_{k=0}^{2p+1}\binom{2p+1}{k}(-1)^ke^{-2i \pi(p-k+1)\xi} = e^{-i \pi \xi}\left( -2i \mathrm{sin}(\pi\xi)\right)^{2p+1}.
$
\end{center}
\label{LEMMA_2}
\end{Lemma}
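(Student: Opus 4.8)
The plan is to collapse the alternating binomial sum into a single power via the binomial theorem and then rewrite the resulting complex number in half-angle form. First I would factor the exponential as $e^{-2i\pi(p-k+1)\xi} = e^{-2i\pi(p+1)\xi}\,e^{2i\pi k\xi}$, so that the factor $e^{-2i\pi(p+1)\xi}$ is independent of $k$ and can be pulled outside the summation. What is left inside is $\sum_{k=0}^{2p+1}\binom{2p+1}{k}(-1)^k e^{2i\pi k\xi} = \sum_{k=0}^{2p+1}\binom{2p+1}{k}\bigl(-e^{2i\pi\xi}\bigr)^k$, which the binomial theorem identifies with $\bigl(1-e^{2i\pi\xi}\bigr)^{2p+1}$.

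Next I would put $1-e^{2i\pi\xi}$ into the form $-e^{i\pi\xi}\bigl(e^{i\pi\xi}-e^{-i\pi\xi}\bigr) = -2i\,e^{i\pi\xi}\sin(\pi\xi)$, using the standard identity $e^{i\pi\xi}-e^{-i\pi\xi} = 2i\sin(\pi\xi)$. Raising this to the power $2p+1$ gives $\bigl(1-e^{2i\pi\xi}\bigr)^{2p+1} = \bigl(-2i\sin(\pi\xi)\bigr)^{2p+1}\,e^{i\pi(2p+1)\xi}$.

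Finally I would reinstate the prefactor $e^{-2i\pi(p+1)\xi}$ and combine the two exponentials: since $-2\pi(p+1)\xi + \pi(2p+1)\xi = -\pi\xi$, the two phases telescope to $e^{-i\pi\xi}$, and the whole expression equals $e^{-i\pi\xi}\bigl(-2i\sin(\pi\xi)\bigr)^{2p+1}$, which is the claimed identity. The argument is completely elementary and I do not expect any genuine obstacle; the only point requiring care is the exponent bookkeeping in this last step, where an off-by-one in $p+1$ versus $p$ would produce the wrong overall phase.
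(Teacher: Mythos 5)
Your computation is correct: the factorization $e^{-2i\pi(p-k+1)\xi}=e^{-2i\pi(p+1)\xi}e^{2i\pi k\xi}$, the binomial theorem giving $\left(1-e^{2i\pi\xi}\right)^{2p+1}$, the half-angle identity $1-e^{2i\pi\xi}=-2i\,e^{i\pi\xi}\sin(\pi\xi)$, and the final phase bookkeeping $-2\pi(p+1)\xi+\pi(2p+1)\xi=-\pi\xi$ all check out. The paper itself does not prove Lemma~\ref{LEMMA_2} but only cites Lemma 1.1 of \cite{Memoire_M2}; your elementary argument is the standard one and supplies exactly what that reference is invoked for.
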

\begin{proof}[Proof of Lemma~\ref{LEMMA_2}]
 
A proof may be found in Lemma 1.1 of \cite{Memoire_M2}.
 
 \end{proof}
\begin{proof}[Proof of Proposition~\ref{PROP_STABILITY}] 
The forward finite difference scheme~(\ref{DEF_SCHEMAS_1}) leads to
\begin{small}
\begin{eqnarray*}
&&\widehat{U^{n+1}}(\xi)\left( 1+\frac{\theta \Delta t}{(\Delta x)^{2p+1}}\sum_{k=0}^{2p+1}\binom{2p+1}{k}(-1)^ke^{-2i \pi(p-k+1)\xi} \right)\\
&&=\widehat{U^n}(\xi)\left( 1-\frac{(1-\theta)\Delta t}{(\Delta x)^{2p+1}}\sum_{k=0}^{2p+1}\binom{2p+1}{k}(-1)^k e^{-2i \pi(p-k+1)\xi}\right), \mathrm{\ for\ any\ } \xi \mathrm{\ in\ } [0,1].
\end{eqnarray*}
\end{small}
The two sums are simplified thanks to Lemma~\ref{LEMMA_2}.
We finally obtain
$
\widehat{U^{n+1}}(\xi)=A_{+}(\xi)\widehat{U^n}(\xi), 
$
with $A_{+}$ the amplification coefficient defined by, $\forall \xi\in[0,1]$
\begin{eqnarray}
A_{+}(\xi)=\frac{\left( 1-\frac{(1-\theta) \Delta t}{(\Delta x)^{2p+1}}e^{-i\pi \xi}(-i)^{2p+1} \left(2\mathrm{sin}(\pi\xi) \right)^{2p+1}\right)}{\left( 1+\frac{\theta \Delta t}{(\Delta x)^{2p+1}}e^{-i\pi \xi}(-i)^{2p+1} \left(2\mathrm{sin}(\pi\xi) \right)^{2p+1}\right)}.
\label{eq_entouree_decentre_droit}
\end{eqnarray}
We are looking for a condition ensuring $
\left| A_{+}(\xi)\right|^2<(1+C\Delta t)^2
$ for any $\xi$ in $[0,1]$.

\begin{exo}
\item Assume that the parameter $p$ of the spatial derivative is even. 
If $p\neq0$, the stability condition leads to
$
\frac{2^{2p}\Delta t}{(\Delta x)^{2p+1}}(\mathrm{sin}(\pi\xi))^{2p}(1-2\theta)\leq-1,
$ (cf. \cite{Memoire_M2})
which is impossible for all $\xi\in[0,1]$ : thus the forward finite difference scheme is unconditionally unstable for $p$ even and non zero.
On the contrary, assuming $p=0$ means that the forward finite difference scheme is stable under CFL condition : $\Delta t(1-2\theta)\leq -\Delta x$ (which implies $\theta>\frac{1}{2}$).

\item In this case, the parameter $p$ of the spatial derivative is odd, then
the sufficient condition becomes $
\frac{\Delta t}{(\Delta x)^{2p+1}}(2\mathrm{sin}(\pi\xi))^{2p}(1-2\theta)\leq1
$ (cf. \cite{Memoire_M2}).
 Then the forward finite difference scheme is stable under the CFL condition 
$
\Delta t(1-2\theta)\leq\frac{\Delta x^{2p+1}}{2^{2p}}.
$
Table~\ref{TABLE_STABLE-TTable} is a straightforward consequence.
\end{exo}

\end{proof}
\begin{Remark} For the backward finite difference scheme, the only difference in the amplification coefficient is $e^{i \pi\xi}$ instead of $e^{-i \pi\xi}$ (in both the numerator and denominator). The parity needed for the stability changes because of that difference.
For the central finite difference scheme, $e^{-i \pi\xi}$ is replaced with $\cos(\pi\xi)$ in the numerator and the denominator of the amplifiaction coefficient.
\end{Remark}
\subsection{Error estimates}
\label{_Error_estimates_}
We define the convergence error as follows.
\begin{Definition}
For all $j\in \mathbb{Z}$ and $n\in\{0,...,N\}$, for $u$ the analytical solution of (\ref{EQ_BASE}) from $u_0$ and $(v_j^n)_{(j,n)}$ the numerical solution of (\ref{EQ_NUMERIQUE}), the convergence error is denoted by $e_j^n$ and defined by
$
e_j^n=\frac{1}{\Delta x}\int_{x_j}^{x_{j+1}}u(t^n,y)dy-v_j^n.
$
\end{Definition}
We are now able to state the main result of this section.
\begin{Theorem} For an initial datum $u_0\in\mathbb{H}^{4p+2}(\mathbb{R})$ (and $u_0\in\mathbb{H}^{6p+3}(\mathbb{R})$ if $\theta=\frac{1}{2}$), the error estimate of the forward finite difference scheme (\ref{DEF_SCHEMAS_1}) (if $p$ is odd) or of the backward finite difference scheme (\ref{DEF_SCHEMAS_2}) (if $p$ is even) satisfies
\begin{small}
\begin{eqnarray*}
\left|\left|e\right|\right|_{\ell^{\infty}\left(0,N;\ell^2_{\Delta}\left(\mathbb{Z}\right)\right)}\lesssim\Delta t\left|\frac{1}{2}-\theta\right|\left|\left|\partial_x^{4p+2}u_0\right|\right|_{\mathbb{L}^2(\mathbb{R})}+\Delta x\left|\left|\partial_x^{2p+2}u_0\right|\right|_{\mathbb{L}^2(\mathbb{R})}+\Delta t^2\left|\left|\partial_x^{6p+3}u_0\right|\right|_{\mathbb{L}^2(\mathbb{R})}.
\end{eqnarray*}
\end{small}
For the central finite difference scheme (\ref{DEF_SCHEMAS_3}), the convergence rate becomes
\begin{small}
\begin{eqnarray*}
\left|\left|e\right|\right|_{\ell^{\infty}\left(0,N;\ell^2_{\Delta}\left(\mathbb{Z}\right)\right)}\lesssim\Delta t\left|\frac{1}{2}-\theta\right|\left|\left|\partial_x^{4p+2}u_0\right|\right|_{\mathbb{L}^2(\mathbb{R})}+\Delta x^2\left|\left|\partial_x^{2p+3}u_0\right|\right|_{\mathbb{L}^2(\mathbb{R})}+\Delta t^2\left|\left|\partial_x^{6p+3}u_0\right|\right|_{\mathbb{L}^2(\mathbb{R})}.
\end{eqnarray*}
\end{small}
All those results are gathered in Table~\ref{TABLE_ERROR_ESTIMATES}.
\end{Theorem}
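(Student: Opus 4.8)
The plan is to carry out, by hand, the usual Lax--Richtmyer argument: the error satisfies the scheme with forcing equal to (a multiple of) the consistency error, so that stability turns the consistency estimate of Proposition~\ref{PROP_1} into the announced convergence estimate. \textbf{Step 1 (error equation).} Multiplying the identity of Definition~\ref{DEF_EPSILON} by $\Delta t$ shows that $(u_\Delta)_j^n$ satisfies $(u_\Delta)_j^{n+1}+\theta\Delta t(D_\bullet^{2p+1}u_\Delta)_j^{n+1}=(u_\Delta)_j^n-(1-\theta)\Delta t(D_\bullet^{2p+1}u_\Delta)_j^n+\Delta t\,\epsilon_j^n$, while $v_j^n$ satisfies (\ref{EQ_NUMERIQUE}) exactly. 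Since $D_\bullet^{2p+1}$ is linear, subtracting gives for $e_j^n=(u_\Delta)_j^n-v_j^n$ the same scheme with right-hand side forcing $\Delta t\,\epsilon_j^n$, and moreover $e_j^0=0$ because $v_j^0=\frac1{\Delta x}\int_{x_j}^{x_{j+1}}u_0=(u_\Delta)_j^0$.

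\textbf{Step 2 (one-step estimate in Fourier).} Applying the semi-discrete Fourier transform $\widehat{E^n}(\xi)=\sum_{k}e_k^n e^{2i\pi k\xi}$ of Sect.~\ref{_Stability_} and Lemma~\ref{LEMMA_2}, the error equation becomes $(1+\theta\Delta t\,\lambda_\bullet(\xi))\,\widehat{E^{n+1}}(\xi)=(1-(1-\theta)\Delta t\,\lambda_\bullet(\xi))\,\widehat{E^n}(\xi)+\Delta t\,\widehat{\mathcal E^n}(\xi)$, where $\lambda_\bullet(\xi)$ is the symbol of $D_\bullet^{2p+1}$; hence $\widehat{E^{n+1}}=A_\bullet(\xi)\widehat{E^n}+\Delta t\,(1+\theta\Delta t\,\lambda_\bullet(\xi))^{-1}\widehat{\mathcal E^n}$. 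Under the CFL condition of Table~\ref{TABLE_STABLE-TTable} appropriate to the scheme (forward with $p$ odd, backward with $p$ even, or central), Proposition~\ref{PROP_STABILITY} gives $|A_\bullet(\xi)|\le 1+C\Delta t$. For the forcing multiplier one checks that $\mathrm{Re}\,\lambda_\bullet(\xi)\ge0$: for instance for the forward scheme with $p$ odd one has $(-i)^{2p+1}=i$, so $\lambda_+(\xi)=\Delta x^{-(2p+1)} i\,e^{-i\pi\xi}(2\sin\pi\xi)^{2p+1}$ and $\mathrm{Re}\,\lambda_+(\xi)=2^{2p+1}\Delta x^{-(2p+1)}(\sin\pi\xi)^{2p+2}\ge0$ on $[0,1]$, and similarly for the backward and central cases. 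Since $\theta\ge0$, this forces $|1+\theta\Delta t\,\lambda_\bullet(\xi)|\ge1$. Using the Parseval identity $\sum_j\Delta x|e_j^n|^2=\Delta x\int_0^1|\widehat{E^n}(\xi)|^2 d\xi$ we conclude $\|e^{n+1}\|_{\ell^2_\Delta(\mathbb Z)}\le(1+C\Delta t)\|e^n\|_{\ell^2_\Delta(\mathbb Z)}+\Delta t\,\|\epsilon^n\|_{\ell^2_\Delta(\mathbb Z)}$.

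\textbf{Step 3 (discrete Grönwall and conclusion).} Iterating from $n=0$ with $e^0=0$ and $(1+C\Delta t)^k\le e^{CT}$ for $k\le N$, one gets $\|e^n\|_{\ell^2_\Delta(\mathbb Z)}\le\Delta t\sum_{k=0}^{n-1}(1+C\Delta t)^k\,\|\epsilon\|_{\ell^\infty(0,N;\ell^2_\Delta(\mathbb Z))}\le T\,e^{CT}\,\|\epsilon\|_{\ell^\infty(0,N;\ell^2_\Delta(\mathbb Z))}$. Taking the supremum over $n\in\{0,\dots,N\}$ yields $\|e\|_{\ell^\infty(0,N;\ell^2_\Delta(\mathbb Z))}\lesssim\|\epsilon\|_{\ell^\infty(0,N;\ell^2_\Delta(\mathbb Z))}$, and substituting the consistency bound of Proposition~\ref{PROP_1} for the forward/backward schemes (respectively its central-scheme variant) produces exactly the two stated estimates.

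\textbf{Main obstacle.} Steps~1 and~3 are routine; the delicate point is Step~2, namely verifying that the implicit solve $(I+\theta\Delta t\,D_\bullet^{2p+1})^{-1}$ is a contraction on $\ell^2_\Delta(\mathbb Z)$, which comes down to the sign of $\mathrm{Re}\,\lambda_\bullet(\xi)$ for each of the three schemes under the parity/CFL restriction of the statement. One must also keep track of the fact that the constant $C$ is the uniform stability constant of Proposition~\ref{PROP_STABILITY}, so that the factor $e^{CT}$ is genuinely independent of $\Delta x$ and $\Delta t$.
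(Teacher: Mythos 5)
Your proposal is correct and follows essentially the same route as the paper: the error recursion in Fourier space, the bound $\bigl|1+\theta\Delta t\,\lambda_\bullet(\xi)\bigr|\ge 1$ on the implicit multiplier, the stability bound $|A_\bullet(\xi)|\le 1+C\Delta t$, discrete Gr\"onwall with $e^0=0$, and finally the consistency estimate of Proposition~\ref{PROP_1}. The only difference is that you explicitly verify $\mathrm{Re}\,\lambda_\bullet(\xi)\ge 0$ where the paper merely asserts the resulting $\mathbb{L}^\infty$ bound, which is a welcome but not substantively different addition.
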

\begin{proof}
We suppose $p$ odd, so we work with the forward finite difference scheme. The case $p$ even, with the backward scheme is similar. The definition of the convergence error implies
\begin{eqnarray*}
\widehat{e^{n+1}}(\xi)=A_{+}(\xi)\widehat{e^n}(\xi)+\frac{\Delta t}{1+\frac{\theta\Delta t}{\Delta x^{2p+1}}e^{-i \pi\xi}(-i)^{2p+1}(2\mathrm{sin}(\pi\xi))^{2p+1}}\widehat{\epsilon^n}(\xi).
\end{eqnarray*}
One has $\left|\left|\frac{1}{1+\frac{\theta\Delta t}{\Delta x^{2p+1}}e^{-i \pi\xi}(-i)^{2p+1}(2\mathrm{sin}(\pi\xi))^{2p+1}}\right|\right|_{\mathbb{L}^{\infty}([0,1])}\leq1$ and the stability condition gives $\left|\left|A_{+}\right|\right|_{\mathbb{L}^{\infty}([0,1])}\leq1+C\Delta t$. 
Thus, we obtain the following estimate, by discrete Gr\"onwall lemma
\begin{eqnarray*}
||e^{n+1}||_{\ell^2_{\Delta}}\leq (1+C\Delta t)||e^n||_{\ell^2_{\Delta}}+\Delta t||\epsilon^n||_{\ell^2_{\Delta}}\leq ...\leq e^{CT} ||e^0||_{\ell^2_{\Delta}}+\Delta te^{CT}\sum_{k=0}^{n}||\epsilon^k||_{\ell^2_{\Delta}}.
\end{eqnarray*}
The initial condition $v_j^0$, Eq.~(\ref{EQ_NUMERIQUE}), together with the consistency error conclude the proof.

\end{proof}
\begin{Remark}
As expected, for the particular case $\theta=\frac{1}{2}$ (the so-called Crank-Nicolson case), the rate of convergence in time is better as illustrated in Table~\ref{TABLE_ERROR_ESTIMATES}, provided $u_0\in\mathbb{H}^{6p+3}(\mathbb{R})$ (and not only in $\mathbb{H}^{4p+2}(\mathbb{R})$).
\end{Remark}
\section{Less smooth initial data}
\label{_For_any_initial_data_}
The previous order of accuracy is obtained for initial data $u_0$ at least in $\mathbb{H}^{4p+2}(\mathbb{R})$ (or $\mathbb{H}^{6p+3}(\mathbb{R})$ if $\theta=\frac{1}{2}$). In this section, our aim is to relax this hypothesis to obtain rates of convergence for non-smooth initial data, for example, $u_0\in \mathbb{H}^m(\mathbb{R})$ with $m>0$. We detail only the case $\theta\neq\frac{1}{2}$ but state the Crank-Nicolson results in Table~\ref{TABLE_ERROR_ESTIMATES}.

\subsection{Initial datum in $\mathbb{H}^{m}(\mathbb{R})$ with $m\geq 2p+2$}
\label{SUB_1_1}
As explained previously (Remark~\ref{REM_2_2_22}), the regularity of $u_0$ is determined by the Taylor expansion in time. A first step is then to deal with the time term in error estimates. The following proposition provides that the time error prevails until $u_0\in\mathbb{H}^{2p+2}(\mathbb{R})$, for which the spatial error becomes predominant.
\begin{Proposition}
\label{Prop_model}
Assume $u_0\in \mathbb{H}^{m}(\mathbb{R})$ with $m\geq 2p+2$, and let us fix $M=\mathrm{min}(m,4p+2)$, then the error estimate for the forward (respectively backward) finite difference scheme, if $p$ is odd (respectively even), yields 
\begin{eqnarray*}
\left|\left| e\right|\right|_{\ell^{\infty}\left(0,N; \ell^2_{\Delta}\left(\mathbb{Z}\right)\right)} \lesssim  \Delta t^{\frac{M}{4p+2}}||\partial_x^Mu_0||_{\mathbb{L}^2(\mathbb{R})}+\Delta x||\partial_x^{2p+2}u_0||_{\mathbb{L}^2(\mathbb{R})}.
\end{eqnarray*}
For the central difference scheme, we suppose $m\geq2p+3$, and one has (for the same $M$)
\begin{eqnarray*}
\left|\left| e\right|\right|_{\ell^{\infty}\left(0,N; \ell^2_{\Delta}\left(\mathbb{Z}\right)\right)} \lesssim  \Delta t^{\frac{M}{4p+2}}||\partial_x^Mu_0||_{\mathbb{L}^2(\mathbb{R})}+\Delta x^2||\partial_x^{2p+3}u_0||_{\mathbb{L}^2(\mathbb{R})}.
\end{eqnarray*}
\end{Proposition}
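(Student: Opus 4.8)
The strategy is to interpolate between the smooth-data error estimate of Section~\ref{_Error_estimates_} and the elementary stability bound, by cutting the initial datum into a low-frequency and a high-frequency part. First, the range $m\ge 4p+2$ is immediate: then $M=4p+2$, $u_0\in\mathbb{H}^{4p+2}(\mathbb{R})$, and since $\theta\neq\frac12$ and $\Delta t^{M/(4p+2)}=\Delta t$, the announced inequality is precisely the one already proved in Section~\ref{_Error_estimates_}. So from now on I assume $2p+2\le m<4p+2$, hence $M=m$.

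I would then fix a cut-off frequency $K>0$, to be chosen at the very end, and write $u_0=w_0+z_0$, with $\widehat{w_0}$ equal to $\widehat{u_0}$ on $\{|\xi|\le K\}$ and to $0$ elsewhere, and $z_0=u_0-w_0$. By linearity of both~(\ref{EQ_BASE}) and~(\ref{EQ_NUMERIQUE}) (the initialization in~(\ref{EQ_NUMERIQUE}) being linear in $u_0$), the convergence error splits accordingly as $e=e_w+e_z$. The function $w_0$ is band-limited, hence lies in $\mathbb{H}^s(\mathbb{R})$ for every $s$, so the estimate of Section~\ref{_Error_estimates_} applies to $e_w$ with a constant depending only on $T$, $p$, $\theta$ and the stability constant of Proposition~\ref{PROP_STABILITY}, hence independent of $K$. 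Using Plancherel together with $0\le 4p+2-m$ one gets $\|\partial_x^{4p+2}w_0\|_{\mathbb{L}^2}\le K^{4p+2-m}\|\partial_x^{m}u_0\|_{\mathbb{L}^2}$, while $\|\partial_x^{2p+2}w_0\|_{\mathbb{L}^2}\le\|\partial_x^{2p+2}u_0\|_{\mathbb{L}^2}$, finite because $m\ge 2p+2$; the possible $\Delta t^2$ term is handled the same way and, as one checks, ends up dominated once $K$ is fixed. This yields $\|e_w\|_{\ell^\infty(0,N;\ell^2_\Delta(\mathbb{Z}))}\lesssim \Delta t\,K^{4p+2-m}\|\partial_x^{m}u_0\|_{\mathbb{L}^2}+\Delta x\,\|\partial_x^{2p+2}u_0\|_{\mathbb{L}^2}$.

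For $e_z$ I would not use consistency at all, only stability: since the scheme is stable (Proposition~\ref{PROP_STABILITY}) and its initialization takes cell averages of $z_0$, the Cauchy--Schwarz inequality gives $\|(v_z)^n\|_{\ell^2_\Delta}\le e^{CT}\|z_0\|_{\mathbb{L}^2}$; likewise the cell averages of the exact solution issued from $z_0$ satisfy $\|(u_\Delta^z)^n\|_{\ell^2_\Delta}\le\|u^z(t^n,\cdot)\|_{\mathbb{L}^2}=\|z_0\|_{\mathbb{L}^2}$ by Cauchy--Schwarz and the $\mathbb{L}^2$-conservation of Remark~\ref{REM_1}. Hence $\|e_z\|_{\ell^\infty(0,N;\ell^2_\Delta(\mathbb{Z}))}\lesssim\|z_0\|_{\mathbb{L}^2}$, and since $\widehat{z_0}$ is supported in $\{|\xi|>K\}$, $\|z_0\|_{\mathbb{L}^2}\le K^{-m}\|\partial_x^{m}u_0\|_{\mathbb{L}^2}$. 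Adding the two contributions and choosing $K=\Delta t^{-1/(4p+2)}$ — so that $\Delta t\,K^{4p+2-m}=K^{-m}=\Delta t^{m/(4p+2)}$ — gives the first inequality with $M=m$. The central scheme is treated identically, the $\Delta x$ term being replaced by $\Delta x^2\|\partial_x^{2p+3}w_0\|_{\mathbb{L}^2}\le\Delta x^2\|\partial_x^{2p+3}u_0\|_{\mathbb{L}^2}$ (finite since $m\ge 2p+3$), which is the second inequality.

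The step I expect to be the main obstacle is precisely what makes the decomposition legitimate: the constant in the Section~\ref{_Error_estimates_} estimate must be uniform in $K$, equivalently uniform over the band-limited data $w_0$. This holds because that constant is built only from the stability constant and from $T$, $p$, $\theta$, never from the initial datum — but one has to keep track of this while quoting the earlier proof. Everything else (the Plancherel inequalities on each frequency band, the Cauchy--Schwarz bounds on cell averages, the verification that the $\Delta t^2$ remainder is subsumed, and the optimization in $K$) is routine.
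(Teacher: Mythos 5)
Your proof is correct and follows essentially the same route as the paper: there, $u_0$ is regularized by convolution with a mollifier whose Fourier transform is a smooth cutoff at frequency $1/\delta$, the error is split into the three corresponding pieces $E_1+E_2+E_3$, and one optimizes $\delta=\Delta t^{1/(4p+2)}$ --- exactly your low/high-frequency decomposition with $K=1/\delta$, your $e_z$ bound merging the paper's $E_1$ and $E_3$ terms. The only cosmetic difference is your sharp Fourier truncation versus the paper's smooth cutoff; both give the same Plancherel (Bernstein-type) inequalities, and your check that the $\Delta t^2\|\partial_x^{6p+3}w_0\|_{\mathbb{L}^2}$ remainder is dominated after the choice of $K$ is accurate.
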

Before proving this result, we introduce a regularization of $u_0$ thanks to mollifiers $\left(\varphi^{\delta}\right)_{\delta>0} $. Let $\chi$ be a $\mathcal{C}^{\infty}$--function such that
\begin{itemize}
\item $0\leq\chi\leq 1$,
\item $\chi\equiv 1$ in $[-\frac{1}{2},\frac{1}{2}]$ and $\mathrm{supp}(\chi)\subset[-1,1]$ (where $\mathrm{supp}$ is its support),
\item $\chi(-\xi)=\chi(\xi)$, $\forall \xi\in[-1,1]$.
\end{itemize}
Let $\varphi$ be such as $\widehat{\varphi}\left(\xi\right)=\chi\left(\xi\right)$ and for all $\delta>0$, we define $\varphi^{\delta}$ such that $\widehat{\varphi^{\delta}}\left(\xi\right)=\chi\left(\delta\xi\right)$, which implies $\varphi^{\delta}=\frac{1}{\delta}\varphi\left(\frac{.}{\delta}\right)$.
 Eventually, \begin{itemize}
 \item let $u^{\delta}$ be the solution of (\ref{EQ_BASE}) with $u_0^{\delta}=u_0\star\varphi^{\delta}$ as initial data, where $\star$ stands for the convolution product. 
 \item We denote then $((v^{\delta})_j^n)_{(n,j)\in\{0,...,N\}\times\mathbb{Z}}$ the numerical solution obtained by applying the numerical scheme (\ref{EQ_NUMERIQUE}) from  $u_0^{\delta}$. 
 \item The unknowns $u$ and $(v_j^n)_{(n,j)\in\{0,...,N\}\times\mathbb{Z}}$ are always the exact and numerical solutions starting from the initial data $u_0$.
 \end{itemize}
 \begin{Lemma}
Assume $u_0 \in \mathbb{H}^{r}(\mathbb{R})$ with $r>0$ then the following upper bound holds, for $0\leq\ell\leq r\leq s$, \begin{small}\begin{eqnarray*}
\left|\left|u_0-u^{\delta}_0\right|\right|_{\mathbb{H}^{\ell}(\mathbb{R})}\lesssim \delta^{r-\ell}||u_0||_{\mathbb{H}^{r}(\mathbb{R})} \ \ \ and \ \ \ \left|\left|u_0^{\delta}\right|\right|_{\mathbb{H}^s(\mathbb{R})}\lesssim \frac{1}{\delta^{s-r}}||u_0||_{\mathbb{H}^{r}(\mathbb{R})}.
\end{eqnarray*}\end{small}
\label{lemme_2}
\end{Lemma}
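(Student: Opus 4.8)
The plan is to work entirely on the Fourier side, where both mollification and Sobolev norms have transparent descriptions. Recall that $\widehat{u_0^{\delta}}(\xi) = \chi(\delta\xi)\,\widehat{u_0}(\xi)$, so that
\[
\widehat{u_0 - u_0^{\delta}}(\xi) = \bigl(1 - \chi(\delta\xi)\bigr)\widehat{u_0}(\xi).
\]
The first estimate then reduces to bounding
\[
\left\|u_0 - u_0^{\delta}\right\|_{\mathbb{H}^{\ell}(\mathbb{R})}^2 = \int_{\mathbb{R}} (1+|\xi|^2)^{\ell}\bigl(1-\chi(\delta\xi)\bigr)^2 |\widehat{u_0}(\xi)|^2\, d\xi
\]
by $\delta^{2(r-\ell)}\|u_0\|_{\mathbb{H}^r(\mathbb{R})}^2$. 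The key pointwise inequality I would establish is
\[
(1+|\xi|^2)^{\ell}\bigl(1-\chi(\delta\xi)\bigr)^2 \lesssim \delta^{2(r-\ell)} (1+|\xi|^2)^{r}
\]
uniformly in $\xi$ and $\delta$. Since $0\leq 1-\chi\leq 1$, the left-hand side vanishes where $|\delta\xi|\leq\frac12$; on the support of $1-\chi(\delta\cdot)$ one has $|\xi|\geq \frac{1}{2\delta}$, hence $1+|\xi|^2 \gtrsim \delta^{-2}$, i.e. $1 \lesssim \delta^{2(r-\ell)}(1+|\xi|^2)^{r-\ell}$ because $r-\ell\geq 0$; multiplying by $(1+|\xi|^2)^{\ell}$ and using $(1-\chi)^2\leq 1$ gives the claim. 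Integrating against $|\widehat{u_0}|^2$ and taking square roots yields the first bound.

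For the second estimate, write
\[
\left\|u_0^{\delta}\right\|_{\mathbb{H}^s(\mathbb{R})}^2 = \int_{\mathbb{R}} (1+|\xi|^2)^{s}\chi(\delta\xi)^2 |\widehat{u_0}(\xi)|^2\, d\xi,
\]
and now exploit that $\mathrm{supp}\,\chi\subset[-1,1]$ forces $\chi(\delta\xi)=0$ unless $|\xi|\leq \frac{1}{\delta}$, so on the relevant set $1+|\xi|^2 \lesssim \delta^{-2}$, giving $(1+|\xi|^2)^{s-r} \lesssim \delta^{-2(s-r)}$ since $s-r\geq 0$. Therefore
\[
(1+|\xi|^2)^s \chi(\delta\xi)^2 \lesssim \delta^{-2(s-r)}(1+|\xi|^2)^r,
\]
and integrating against $|\widehat{u_0}|^2$ and taking square roots produces $\|u_0^{\delta}\|_{\mathbb{H}^s(\mathbb{R})} \lesssim \delta^{-(s-r)}\|u_0\|_{\mathbb{H}^r(\mathbb{R})}$.

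There is no real obstacle here; the only point requiring a little care is keeping track of which inequality uses $r-\ell\geq 0$ versus $s-r\geq 0$, and noting that the implicit constants depend only on $\chi$ (through $\sup|1-\chi|\leq 1$ and the size of $\mathrm{supp}\,\chi$) and not on $\delta$ or on $u_0$. Both bounds are ultimately a one-line pointwise comparison of Fourier multipliers followed by Plancherel, so the proof is short.
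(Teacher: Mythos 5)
Your argument is correct, and it is essentially the ``very classical argument'' the paper alludes to: the paper itself gives no proof of Lemma~\ref{lemme_2}, deferring entirely to the reference \cite{Courtes_Lagoutiere_Rousset}, so your Fourier-side computation fills in exactly what is omitted. Both pointwise multiplier bounds are right: for the first, $1-\chi(\delta\xi)$ vanishes for $|\xi|\leq\frac{1}{2\delta}$, and on its support $1+|\xi|^2\geq\frac{1}{4\delta^2}$ gives $(1+|\xi|^2)^{-(r-\ell)}\leq(4\delta^2)^{r-\ell}$ uniformly in $\delta>0$; Plancherel then does the rest. The only point worth flagging is in the second estimate: from $|\xi|\leq\frac{1}{\delta}$ you get $1+|\xi|^2\leq 1+\delta^{-2}$, and this is $\lesssim\delta^{-2}$ with a constant independent of $\delta$ only when $\delta$ stays bounded (say $\delta\leq 1$); for large $\delta$ the claimed bound $\delta^{-(s-r)}\|u_0\|_{\mathbb{H}^r}$ can actually fail (e.g.\ $\widehat{u_0}=\mathds{1}_{[-1,1]}$ with $s-r>\frac12$). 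Since in the paper $\delta$ is always chosen as a power of $\Delta t$ or $\Delta x$ tending to zero, this restriction is harmless, but it should be stated. With that caveat recorded, your proof is complete and self-contained.
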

\vspace*{-0.7cm}
\begin{proof}
Lemma~\ref{lemme_2} is proved in~\cite{Courtes_Lagoutiere_Rousset} and follows  from very classical arguments (see also \cite{Despres_2008}).
\end{proof}

\begin{proof}[Proof of Proposition~\ref{Prop_model}]
We are now able to prove Proposition~\ref{Prop_model}. The triangular inequality applied to the convergence error gives
$
||e^n||_{\ell^2_{\Delta}}\lesssim E_1+E_2+E_3
$
with\begin{small}
\begin{eqnarray}
E_1=\left(\sum_{j\in\mathbb{Z}}\Delta x \left(\frac{1}{\Delta x} \int_{x_j}^{x_{j+1}}u(t^n,x)-u^{\delta}(t^n,x)dx\right)^2\right)^{\frac{1}{2}},\label{DEF_E_1}
\end{eqnarray}
\vspace*{-0.6cm}
\begin{eqnarray}
E_2=\left(\sum_{j\in\mathbb{Z}}\Delta x \left(\frac{1}{\Delta x}\int_{x_j}^{x_{j+1}}u^{\delta}(t^n,x)dx-(v^{\delta})^n_j\right)^2\right)^{\frac{1}{2}},\label{DEF_E_2}
\end{eqnarray}
\vspace*{-0.6cm}
\begin{eqnarray}
E_3=\left(\sum_{j\in\mathbb{Z}}\Delta x\left((v^{\delta})^n_j-v^n_j\right)^2\right)^{\frac{1}{2}}.
\label{DEF_E_3}
\end{eqnarray}\end{small}
Cauchy--Schwarz inequality together with the conservation of the $\mathbb{L}^2$-norm (Remark~\ref{REM_1}) yield
$
E_1\leq ||u_0-u_0^{\delta}||_{\mathbb{L}^2(\mathbb{R})}\lesssim \delta^M ||\partial_x^Mu_0||_{\mathbb{L}^2(\mathbb{R})}.
$
The latest inequality comes from Lemma~\ref{lemme_2} with $\ell=0$ and $r=M$.

For the $E_2$--term, we use the previous section (Sect.~\ref{_Order_of_accuracy_for_a_smooth_initial_data_}). Indeed, $E_2$ corresponds to the convergence error for a smooth initial data $u_0^{\delta}$. Hence, one has
\begin{small}
\begin{eqnarray*}
E_2\lesssim\Delta t||\partial_x^{4p+2}u_0^{\delta}||_{\mathbb{L}^2(\mathbb{R})}+\Delta x||\partial_x^{2p+2}u_0^{\delta}||_{\mathbb{L}^2(\mathbb{R})}\lesssim \frac{\Delta t}{\delta^{4p+2-M}}||\partial_x^{M}u_0||_{\mathbb{L}^2(\mathbb{R})}+\Delta x||\partial_x^{2p+2}u_0||_{\mathbb{L}^{2}(\mathbb{R})},
\end{eqnarray*}
\end{small}
where the latest inequality comes from Lemma~\ref{lemme_2} with $(s,r)=(4p+2,M)$ and $(s,r)=(2p+2, 2p+2)$.

Finally, the stability of the scheme gives the following estimate for $E_3$ :
\begin{eqnarray*}
E_3=||(v^{\delta})^n-v^n||_{\ell_{\Delta}^2}\leq||u_0^{\delta}-u_0||_{\mathbb{L}^{2}(\mathbb{R})}.
\end{eqnarray*}
Thus, the convergence error is upper bounded by
\begin{eqnarray*}
\left|\left|e^n\right|\right|_{\ell^2_{\Delta}}\lesssim\delta^M||\partial_x^Mu_0||_{\mathbb{L}^2(\mathbb{R})}+\frac{\Delta t}{\delta^{4p+2-M}}||\partial_x^{M}u_0||_{\mathbb{L}^2(\mathbb{R})}+\Delta x||\partial_x^{2p+2}u_0||_{\mathbb{L}^{2}(\mathbb{R})}.
\end{eqnarray*}
Proposition~\ref{Prop_model} comes from the optimal choice for $\delta$ : $\delta=\Delta t^{\frac{1}{4p+2}}$.

\end{proof}
\begin{Remark}
The result for the central finite difference scheme is proved exactly in the same way, with the same $s,\ r,\ \ell$ and $\delta$.
\end{Remark}
\subsection{Initial datum in $\mathbb{H}^m(\mathbb{R})$ with $m\geq0$}
The main result of this paper is summarized in the following theorem where the initial data $u_0$ has any Sobolev regularity.
\begin{Theorem}
Assume that $u_0\in \mathbb{H}^m(\mathbb{R})$ with $0\leq m$, then, the forward (respectively backward) finite difference scheme if $p$ is odd (respectively even) has the following error-estimate
\begin{eqnarray*}
\left|\left| e\right|\right|_{\ell^{\infty}\left(0,N;\ell^2_{\Delta}\left(\mathbb{Z}\right)\right)} \lesssim \Delta t^{\frac{\mathrm{min}(m,4p+2)}{4p+2}}||\partial_x^{\mathrm{min}(4p+2,m)}u_0||_{\mathbb{L}^2(\mathbb{R})}+\Delta x^{\frac{\mathrm{min}(m,2p+2)}{2p+2}} ||\partial_x^{\mathrm{min}(2p+2,m)}u_0||_{\mathbb{L}^2(\mathbb{R})}.
\end{eqnarray*}
The previous inequality becomes for the central finite difference scheme
\begin{eqnarray*}
\left|\left| e\right|\right|_{\ell^{\infty}\left(0,N;\ell^2_{\Delta}\left(\mathbb{Z}\right)\right)} \lesssim \Delta t^{\frac{\mathrm{min}(m,4p+2)}{4p+2}} ||\partial_x^{\mathrm{min}(4p+2,m)}u_0||_{\mathbb{L}^2(\mathbb{R})}+\Delta x^{2\frac{\mathrm{min}(m,2p+3)}{2p+3}} ||\partial_x^{\mathrm{min}(2p+3,m)}u_0||_{\mathbb{L}^2(\mathbb{R})}.
\end{eqnarray*}
The previous results are summarized in Table~\ref{TABLE_ERROR_ESTIMATES}.
\label{16-prop}
\end{Theorem}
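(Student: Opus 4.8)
The plan is to recycle the mollifier decomposition used in the proof of Proposition~\ref{Prop_model}, but without the hypothesis $m\geq 2p+2$. A preliminary reduction simplifies the bookkeeping: when $m\geq 2p+2$ (resp.\ $m\geq 2p+3$ for the central scheme), Proposition~\ref{Prop_model} \emph{is} the claimed estimate, because then $\min(m,2p+2)=2p+2$ and $\min(m,2p+3)=2p+3$, so the spatial exponents $\tfrac{\min(m,2p+2)}{2p+2}$ and $2\tfrac{\min(m,2p+3)}{2p+3}$ are $1$ and $2$, while $\min(m,4p+2)$ is the quantity $M$ there. Hence I would assume from now on $0\leq m<2p+2$ (resp.\ $0\leq m<2p+3$); in this regime $\min(m,4p+2)=\min(m,2p+2)=\min(m,2p+3)=m$, so every Sobolev norm occurring in the target inequality is just $\|\partial_x^m u_0\|_{\mathbb{L}^2(\mathbb{R})}$.

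Next I would split the convergence error exactly as in \eqref{DEF_E_1}--\eqref{DEF_E_3}, namely $\|e^n\|_{\ell^2_{\Delta}}\lesssim E_1+E_2+E_3$, with the regularised datum $u_0^{\delta}=u_0\star\varphi^{\delta}$ and its associated exact/numerical solutions $u^{\delta},(v^{\delta})^n$. The two outer terms are handled precisely as in Proposition~\ref{Prop_model}: Cauchy--Schwarz together with the conservation of the $\mathbb{L}^2$-norm (Remark~\ref{REM_1}) for $E_1$, and the $\ell^2_{\Delta}$-stability of the scheme for $E_3$, which gives $E_1+E_3\lesssim\|u_0-u_0^{\delta}\|_{\mathbb{L}^2(\mathbb{R})}\lesssim\delta^{m}\|\partial_x^m u_0\|_{\mathbb{L}^2(\mathbb{R})}$ by Lemma~\ref{lemme_2} (read, as in the paper, as a frequency-truncation estimate with $\ell=0$, $r=m$). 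The middle term $E_2$ is the convergence error for the \emph{smooth} datum $u_0^{\delta}$, so Proposition~\ref{Prop_model} applies to it with its maximal value $M=4p+2$, yielding $E_2\lesssim\Delta t\,\|\partial_x^{4p+2}u_0^{\delta}\|_{\mathbb{L}^2(\mathbb{R})}+\Delta x\,\|\partial_x^{2p+2}u_0^{\delta}\|_{\mathbb{L}^2(\mathbb{R})}$ for the forward/backward scheme, and with $\Delta x^2\,\|\partial_x^{2p+3}u_0^{\delta}\|_{\mathbb{L}^2(\mathbb{R})}$ in place of the last term for the central one. Lemma~\ref{lemme_2} with $r=m$ then turns each $\|\partial_x^{s}u_0^{\delta}\|_{\mathbb{L}^2(\mathbb{R})}$ into $\delta^{-(s-m)}\|\partial_x^m u_0\|_{\mathbb{L}^2(\mathbb{R})}$ (legitimate since $m<2p+2\leq s$ here).

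Collecting these bounds gives, for the forward/backward scheme,
\begin{equation*}
\left|\left|e^n\right|\right|_{\ell^2_{\Delta}}\lesssim\left(\delta^{m}+\frac{\Delta t}{\delta^{4p+2-m}}+\frac{\Delta x}{\delta^{2p+2-m}}\right)\left|\left|\partial_x^m u_0\right|\right|_{\mathbb{L}^2(\mathbb{R})},
\end{equation*}
and the analogue with $\Delta x^2/\delta^{2p+3-m}$ for the central scheme. The last step is the choice of $\delta$, and this is the one place where the argument genuinely differs from Proposition~\ref{Prop_model}: for $\theta>\tfrac12$ there is no CFL coupling between $\Delta t$ and $\Delta x$, so no single power of a single small parameter can make the three terms balance simultaneously. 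I would therefore take $\delta=\Delta t^{\frac{1}{4p+2}}+\Delta x^{\frac{1}{2p+2}}$ (resp.\ $\delta=\Delta t^{\frac{1}{4p+2}}+\Delta x^{\frac{2}{2p+3}}$ for the central scheme). Bounding $\delta$ from below by its first summand gives $\Delta t/\delta^{4p+2-m}\leq\Delta t^{\frac{m}{4p+2}}$; bounding it from below by its second summand gives $\Delta x/\delta^{2p+2-m}\leq\Delta x^{\frac{m}{2p+2}}$ (resp.\ $\Delta x^2/\delta^{2p+3-m}\leq\Delta x^{\frac{2m}{2p+3}}$); and since $0<m<2p+2$ is bounded, $\delta^{m}\lesssim\Delta t^{\frac{m}{4p+2}}+\Delta x^{\frac{m}{2p+2}}$ (resp.\ $\delta^{m}\lesssim\Delta t^{\frac{m}{4p+2}}+\Delta x^{\frac{2m}{2p+3}}$). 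Summing the three contributions and recalling that all Sobolev norms here equal $\|\partial_x^m u_0\|_{\mathbb{L}^2(\mathbb{R})}=\|\partial_x^{\min(2p+2,m)}u_0\|_{\mathbb{L}^2(\mathbb{R})}=\|\partial_x^{\min(4p+2,m)}u_0\|_{\mathbb{L}^2(\mathbb{R})}$ yields the announced estimate; as in the rest of Section~\ref{_For_any_initial_data_}, only $\theta\neq\tfrac12$ is detailed.

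The main obstacle is precisely this tension in the choice of $\delta$: the three error pieces want different powers of $\delta$, and the temporal and spatial parts live on the incompatible scales $\Delta t^{1/(4p+2)}$ and $\Delta x^{1/(2p+2)}$ (or $\Delta x^{2/(2p+3)}$), so the single-scale optimisation of Proposition~\ref{Prop_model} no longer applies. Taking $\delta$ to be the \emph{sum} of the two optimal scales, rather than a single power, is what makes every term land below $\Delta t^{m/(4p+2)}$ plus the relevant power of $\Delta x$. A secondary, purely routine point (relevant only if one invokes the estimate of Section~\ref{_Error_estimates_} instead of Proposition~\ref{Prop_model} for $E_2$) is that the leftover $\Delta t^2\|\partial_x^{6p+3}u_0^{\delta}\|_{\mathbb{L}^2(\mathbb{R})}$-type term is harmless, since $\Delta t^2\delta^{-(6p+3-m)}\leq\Delta t^{\,1/2+m/(4p+2)}\leq\Delta t^{\,m/(4p+2)}$ because $2(4p+2)\geq 6p+3$.
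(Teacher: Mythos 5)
Your proof is correct and follows the paper's overall strategy: the same mollifier decomposition into $E_1+E_2+E_3$, the same use of Lemma~\ref{lemme_2} for $E_1$ and $E_3$ and for converting norms of $u_0^{\delta}$ into norms of $u_0$, and a final optimisation over $\delta$. The one genuine divergence is in the treatment of $E_2$ and the form of $\delta$. The paper does not invoke the fully smooth estimate of Section~\ref{_Order_of_accuracy_for_a_smooth_initial_data_} for $E_2$; instead it applies Proposition~\ref{Prop_model} to $u_0^{\delta}$ viewed as an element of $\mathbb{H}^{2p+2}$ (so $M=2p+2$), which produces only two terms, $\bigl(\Delta t^{\frac{p+1}{2p+1}}+\Delta x\bigr)\|\partial_x^{2p+2}u_0^{\delta}\|_{\mathbb{L}^2}$, carried by a single Sobolev norm; one application of Lemma~\ref{lemme_2} with $s=2p+2$, $r=m$ then reduces everything to balancing $\delta^{m}$ against $\bigl(\Delta t^{\frac{p+1}{2p+1}}+\Delta x\bigr)\delta^{-(2p+2-m)}$, solved by $\delta=\bigl(\Delta t^{\frac{p+1}{2p+1}}+\Delta x\bigr)^{\frac{1}{2p+2}}$. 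Your route keeps three terms with different powers of $\delta$ and takes $\delta=\Delta t^{\frac{1}{4p+2}}+\Delta x^{\frac{1}{2p+2}}$; since $\frac{p+1}{(2p+1)(2p+2)}=\frac{1}{4p+2}$, the two choices of $\delta$ are equivalent up to constants, and your term-by-term verification (each summand of $\delta$ controlling the term it was designed for, plus $(a+b)^m\lesssim a^m+b^m$) is valid. Your version avoids nesting a second mollification (Proposition~\ref{Prop_model} applied to $u_0^{\delta}$ re-mollifies internally) and makes the competition between the time scale $\Delta t^{1/(4p+2)}$ and the space scale $\Delta x^{1/(2p+2)}$ explicit; the paper's version is shorter because the $\Delta t$ and $\Delta x$ contributions never separate. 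One small caveat: your blanket claim that in the regime $m<2p+3$ all the minima equal $m$ fails for the central scheme when $p=0$ and $2\leq m<3$ (there $\min(m,4p+2)=2<m$, and Lemma~\ref{lemme_2} with $s=4p+2<r=m$ is out of range), so that sub-case needs the trivial bound $\|\partial_x^{2}u_0^{\delta}\|_{\mathbb{L}^2}\leq\|\partial_x^{2}u_0\|_{\mathbb{L}^2}$ instead; the paper is equally terse about the central scheme, so this is cosmetic.
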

\begin{proof}

Here again, we suppose that $p$ is odd, we thus detail the proof for the forward finite difference scheme. We have already proved the case $m\geq 4p+2$ in Sect.~\ref{_Order_of_accuracy_for_a_smooth_initial_data_} and the case $2p+2\leq m\leq 4p+2$ in Subsect.~\ref{SUB_1_1}. Let us now focus on the case $0\leq m\leq 2p+2$.

The proof of Theorem~\ref{16-prop} follows the same guidelines as the proof of Proposition~\ref{Prop_model}. Let $u_0\in\mathbb{H}^m(\mathbb{R})$, we regularize this initial data thanks to mollifiers $\left(\varphi^{\delta}\right)_{\delta>0}$ whose properties are listed in Subsect.~\ref{SUB_1_1}. This involves introducing the same new unknowns $u^{\delta}$, $u_0^{\delta}$ and $((v^{\delta})_j^n)_{(j,n)}$.

The convergence error $\left(e_j^n\right)_{(j,n)}$ is upper bounded by the same $E_1$, $E_2$ and $E_3$, defined in (\ref{DEF_E_1})-(\ref{DEF_E_3}). Lemma~\ref{lemme_2} with $\ell=0$ and $r=m$ leads to
$
E_1+E_3\lesssim \delta^{m}||\partial_x^m u_0||_{\mathbb{L}^2(\mathbb{R})}.
$
By definition $u_0^{\delta}\in\mathbb{H}^{k}(\mathbb{R})$, $\forall k>0$, therefore Proposition~\ref{Prop_model} applies with $k=2p+2$ for example and $M=\mathrm{min}(k,2p+2)=2p+2$. It gives the following estimate for $E_2$ :
\begin{small}
$
E_2\lesssim \Delta t^{\frac{p+1}{2p+1}}||\partial_x^{2p+2}u_0^{\delta}||_{\mathbb{L}^{2}(\mathbb{R})}+\Delta x||\partial_x^{2p+2}u_0^{\delta}||_{\mathbb{L}^2(\mathbb{R})}\lesssim \left(\Delta t^{\frac{p+1}{2p+1}}+\Delta x\right)||\partial_x^{2p+2}u_0^{\delta}||_{\mathbb{L}^{2}(\mathbb{R})}.
$
\end{small}
We then apply Lemma~\ref{lemme_2} with $s=2p+2$ and $r=m$. Finally, it yields
\begin{eqnarray*}
\left|\left|e^n\right|\right|_{\ell^2_{\Delta}}\lesssim \delta^{m}||\partial_x^m u_0||_{\mathbb{L}^2(\mathbb{R})}+\frac{\left(\Delta t^{\frac{p+1}{2p+1}}+\Delta x\right)}{\delta^{2p+2-m}}||\partial_x^{m}u||_{\mathbb{L}^2(\mathbb{R})}.
\end{eqnarray*}
The conclusion comes from the optimal choice $\delta=\left(\Delta t^{\frac{p+1}{2p+1}}+\Delta x\right)^{\frac{1}{2p+2}}$.

\end{proof}
\begin{Remark}
The backward scheme, with $p$ even, is very similar. The central finite difference scheme is proved with the same method except for the variable $k$, which is taken $k=2p+3$ for that scheme.
\end{Remark}
\section{Numerical results}
\label{_Numerical_results_}
In order to illustrate numerically the previous results, we perform two sets of examples : on the one hand, we compute the numerical rate of convergence of various equations for a fix initial data and one the other hand, the equation is fixed and we test different initial data.\\
In all examples, the computational domain is set to $[0,50]$ subdivided into $J$ cells with $$J\in\{800, 1600, 3200, 6400, 12800, 25600, 51200, 102400\}$$ and the numerical simulation is performed up to time $T=0.1$. Not to have a too restricted Courant-Friedrich-Lewy condition, we implement the implicit scheme ($\theta=1$) and impose  $\Delta t=\Delta x$. The convergence error is computed between the solution with $J$ cells and a 'reference' solution with $2J$ cells in space.\\
Since the indicator function belongs to $\mathbb{H}^s(\mathbb{R})$ for all $s<\frac{1}{2}$, we build test functions in $\mathbb{H}^{s}(\mathbb{R})$ with $ s<\frac{1}{2}+k$ by integrating $k$-times the indicator function. Such functions will be denoted in $\mathbb{H}^{\frac{1}{2}+k-}(\mathbb{R})$.\\
The first test consists of fixing $u_0$ in $\mathbb{H}^{\frac{3}{2}-}$ or $\mathbb{H}^{\frac{5}{2}-}$ and compute the convergence rate for $p=0$ (advection equation), $p=1$ (Airy equation) and $p=2$. The numerical results are gathered in Tables~\ref{H3demi} and \ref{H5demi} and correctly match with the expected theoretical rates.
\begin{table}
\begin{center}
%
%
\begin{tabular}{p{1.6cm}p{1.6cm}p{1.6cm}p{1.6cm}p{1.6cm}p{1.6cm}p{1.6cm}}
\hline\noalign{\smallskip}
&\multicolumn{2}{c}{\hspace*{-1cm}$p=0$ }&\multicolumn{2}{c}{\hspace*{-1cm}$p=1$}&\multicolumn{2}{c}{\hspace*{-1cm}$p=2$}\\
$\Delta x$&\multicolumn{2}{c}{\hspace*{-1cm}(Advection)}&\multicolumn{2}{c}{\hspace*{-1cm}(Airy)}&\multicolumn{2}{c}{\hspace*{-1cm}(Fifth-order derivative)}\\
\hline\noalign{\smallskip}
&$\mathbb{L}^2$-error&order&$\mathbb{L}^2$-error&order&$\mathbb{L}^2$-error&order\\
$6.250.10^{-2}$&&&&&$2.985.10^{-3}$&\\
$3.125.10^{-2}$&&&&&$2.757.10^{-3}$&$0.115$\\
$1.563.10^{-2}$&&&&&$2.441.10^{-3}$&$0.175$\\
$7.813.10^{-3}$&$1.194.10^{-4}$&&$1.348.10^{-3}$&&$2.176.10^{-3}$&$0.166$\\
$3.906.10^{-3}$&$7.381.10^{-5}$&$0.694$&$1.125.10^{-3}$&$0.261$&$1.961.10^{-3}$&$0.149$\\
$1.953.10^{-3}$&$4.471.10^{-5}$&$0.723$&$9.670.10^{-4}$&$0.219$&&\\
$9.766.10^{-4}$&$2.664.10^{-5}$&$0.747$&$7.968.10^{-4}$&$0.279$&&\\
$4.883.10^{-4}$&$1.585.10^{-5}$&$0.749$&$6.572.10^{-4}$&$0.278$&&\\
\hline\noalign{\smallskip}
theoretical&&$0.750$&&$0.250$&&$0.150$\\
\hline\noalign{\smallskip}
\end{tabular}
\caption{For a Sobolev regularity $\mathbb{H}^{\frac{3}{2}-}$}
\label{H3demi}       
\end{center}
\end{table}
\begin{table}
\begin{center}
%
%
\begin{tabular}{p{1.6cm}p{1.6cm}p{1.6cm}p{1.6cm}p{1.6cm}p{1.6cm}p{1.6cm}}
\hline\noalign{\smallskip}
&\multicolumn{2}{c}{\hspace*{-1cm}$p=0$ }&\multicolumn{2}{c}{\hspace*{-1cm}$p=1$}&\multicolumn{2}{c}{\hspace*{-1cm}$p=2$}\\
$\Delta x$&\multicolumn{2}{c}{\hspace*{-1cm}(Advection)}&\multicolumn{2}{c}{\hspace*{-1cm}(Airy)}&\multicolumn{2}{c}{\hspace*{-1cm}(Fifth-order derivative)}\\
\hline\noalign{\smallskip}
&$\mathbb{L}^2$-error&order&$\mathbb{L}^2$-error&order&$\mathbb{L}^2$-error&order\\
$6.250.10^{-2}$&&&&&$3.388.10^{-2}$&\\
$3.125.10^{-2}$&&&&&$3.639.10^{-2}$&$0.093$\\
$1.563.10^{-2}$&&&&&$3.032.10^{-2}$&$0.263$\\
$7.813.10^{-3}$&$2.586.10^{-3}$&&$1.011.10^{-2}$&&$2.528.10^{-2}$&$0.262$\\
$3.906.10^{-3}$&$1.347.10^{-3}$&$0.940$&$7.507.10^{-3}$&$0.430$&$2.138.10^{-2}$&$0.242$\\
$1.953.10^{-3}$&$6.873.10^{-4}$&$0.971$&$6.267.10^{-3}$&$0.261$&&\\
$9.766.10^{-4}$&$3.437.10^{-4}$&$0.999$&$4.401.10^{-3}$&$0.510$&&\\
$4.883.10^{-4}$&$1.719.10^{-4}$&$1.000$&$3.074.10^{-3}$&$0.520$&&\\
\hline\noalign{\smallskip}
theoretical&&$1.000$&&$0.417$&&$0.250$\\
\hline\noalign{\smallskip}
\end{tabular}
\caption{For a Sobolev regularity $\mathbb{H}^{\frac{5}{2}-}$}
\label{H5demi}       
\end{center}
\end{table}
For the second sample of examples, the equation is fixed ($p=0$ for Fig.~\ref{fig_synthese}-left and $p=1$ for Fig.~\ref{fig_synthese}-right) whereas the Sobolev regularity of the initial data is fluctuating. As shown in Fig.~\ref{fig_synthese}, the theoretical rates are represented by the line and the numerical rates correspond to the dot. The exponent of the Sobolev regularity of $u_0$ is shown in the x-axis. Again, the different rates match very well, which tends to indicate that the convergence orders we have proven are optimal.
\begin{figure}
\begin{center}
\includegraphics[scale=.4]{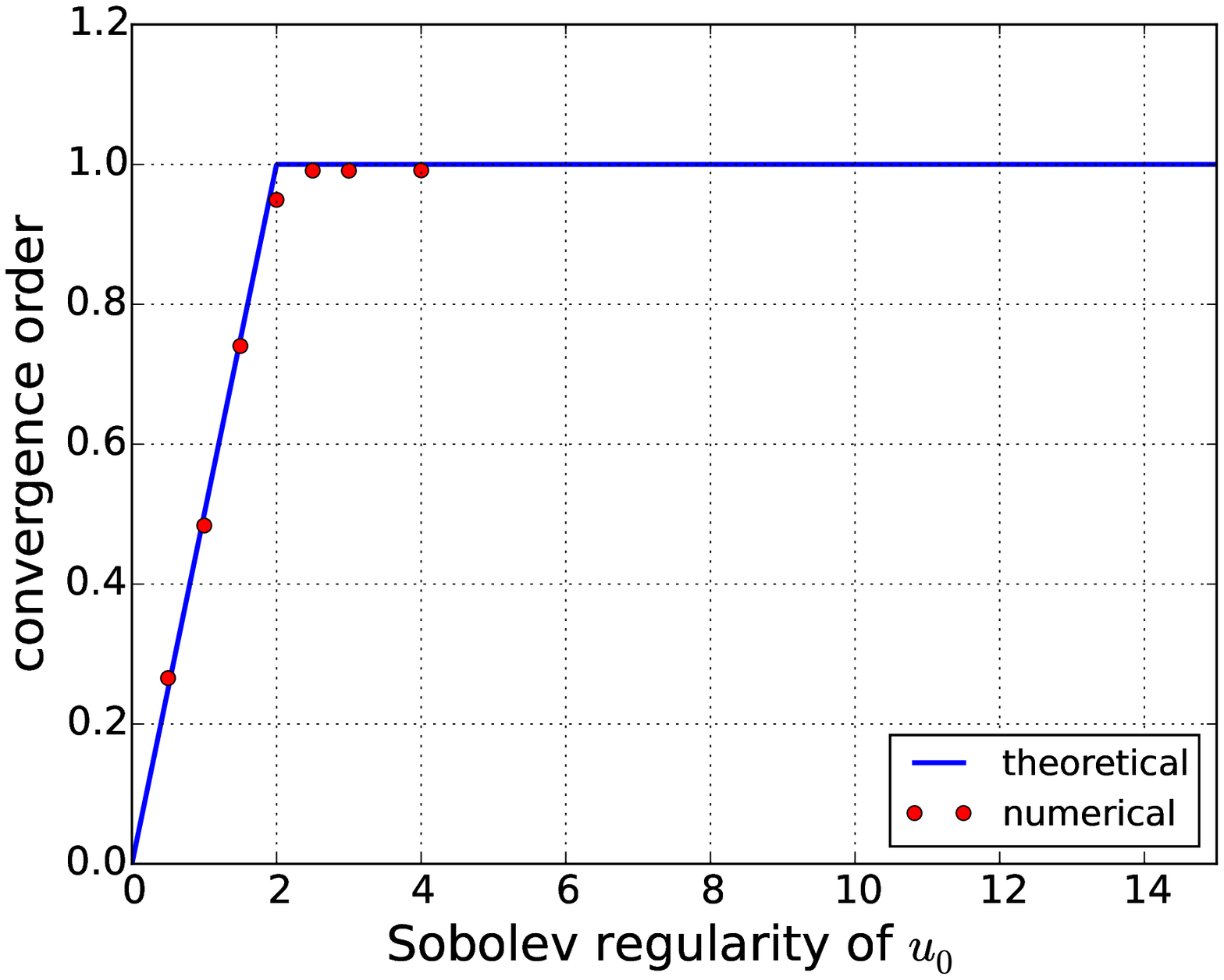}
\includegraphics[scale=.4]{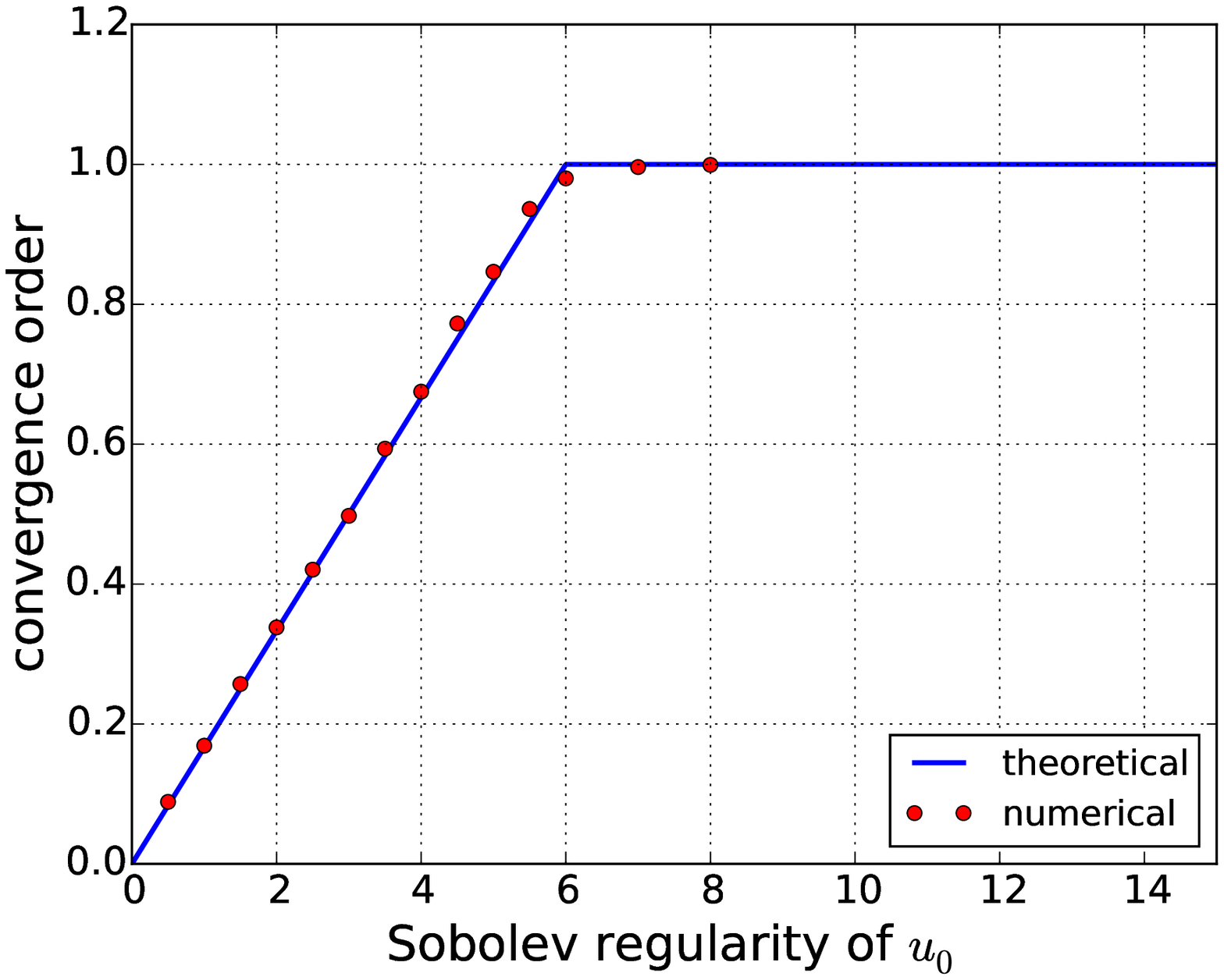}
\caption{Numerical versus theoretical orders--- left : Advection equation ($p=0$), right : Airy equation ($p=1$)}
\label{fig_synthese}
\end{center}
\end{figure}

\begin{landscape}
\begin{table}
\begin{tabular}{p{2cm}p{5cm}p{5cm}p{5cm}}
\hline\noalign{\smallskip}
For $\theta\neq \frac{1}{2}$& \multicolumn{2}{c}{\hspace*{-2cm}$\overbrace{\hspace*{8cm}}^{p \mathrm{\ even}}$} &$p$ odd\\
& $p=0$ (Advection)& $p\neq 0$&  \\
\hline\noalign{\smallskip}
Forward & $\Delta t^{\frac{\mathrm{min}(m,2)}{2}}\left|\left| \partial_x^{\mathrm{min}(m,2)}u_0\right|\right|_{\mathbb{L}^2}$ & &$\Delta t^{\frac{\mathrm{min}(m,4p+2)}{4p+2}}\left|\left| \partial_x^{\mathrm{min}(m,4p+2)}u_0\right|\right|_{\mathbb{L}^2}$\\
scheme &$+\Delta x^{\frac{\mathrm{min}(m,2)}{2}} \left|\left| \partial_x^{\mathrm{min}(m,2)}u_0\right|\right|_{\mathbb{L}^2}$&&$+\Delta x^{\frac{\mathrm{min}(m,2p+2)}{2p+2}} \left|\left| \partial_x^{\mathrm{min}(m,2p+2)}u_0\right|\right|_{\mathbb{L}^2}$\\
&&&\\
Backward &$\Delta t^{\frac{\mathrm{min}(m,2)}{2}}\left|\left| \partial_x^{\mathrm{min}(m,2)}u_0\right|\right|_{\mathbb{L}^2}$  & $\Delta t^{\frac{\mathrm{min}(m,4p+2)}{4p+2}}\left|\left| \partial_x^{\mathrm{min}(m,4p+2)}u_0\right|\right|_{\mathbb{L}^2}$&\\
scheme &$+\Delta x^{\frac{\mathrm{min}(m,2)}{2}} \left|\left| \partial_x^{\mathrm{min}(m,2)}u_0\right|\right|_{\mathbb{L}^2}$&$+\Delta x^{\frac{\mathrm{min}(m,2p+2)}{2p+2}} \left|\left| \partial_x^{\mathrm{min}(m,2p+2)}u_0\right|\right|_{\mathbb{L}^2}$&\\
&&&\\
Central & $\Delta t^{\frac{\mathrm{min}(m,2)}{2}}\left|\left|\partial_x^{\mathrm{min}(m,2)}u_0\right|\right|_{\mathbb{L}^2}$ & $\Delta t^{\frac{\mathrm{min}(m,4p+2)}{4p+2}}\left|\left| \partial_x^{\mathrm{min}(m,4p+2)}u_0\right|\right|_{\mathbb{L}^2}$&$\Delta t^{\frac{\mathrm{min}(m,4p+2)}{4p+2}}\left|\left| \partial_x^{\mathrm{min}(m,4p+2)}u_0\right|\right|_{\mathbb{L}^2}$\\
scheme &$+\Delta x^{2\frac{\mathrm{min}(m,3)}{3}}\left|\left|\partial_x^{\mathrm{min}(m,3)}u_0\right|\right|_{\mathbb{L}^2}$&$+\Delta x^{2\frac{\mathrm{min}(m,2p+3)}{2p+3}}\left|\left|\partial_x^{\mathrm{min}(m,2p+3)}u_0\right|\right|_{\mathbb{L}^2}$&$+\Delta x^{2\frac{\mathrm{min}(m,2p+3)}{2p+3}}\left|\left|\partial_x^{\mathrm{min}(m,2p+3)}u_0\right|\right|_{\mathbb{L}^2}$\\
\hline\noalign{\smallskip}
\end{tabular}

\begin{tabular}{p{2cm}p{5cm}p{5cm}p{5cm}}
\hline\noalign{\smallskip}
For $\theta=\frac{1}{2}$& (Crank-Nicolson case)&&\\
\hline\noalign{\smallskip}
Forward & $\Delta t^{2\frac{\mathrm{min}(m,3)}{3}}\left|\left| \partial_x^{\mathrm{min}(m,3)}u_0\right|\right|_{\mathbb{L}^2(\mathbb{R})}$ & &$\Delta t^{2\frac{\mathrm{min}(m,6p+3)}{6p+3}}\left|\left| \partial_x^{\mathrm{min}(m,6p+3)}u_0\right|\right|_{\mathbb{L}^2(\mathbb{R})}$\\
scheme &$+\Delta x^{\frac{\mathrm{min}(m,2)}{2}} \left|\left| \partial_x^{\mathrm{min}(m,2)}u_0\right|\right|_{\mathbb{L}^2(\mathbb{R})}$&&$+\Delta x^{\frac{\mathrm{min}(m,2p+2)}{2p+2}} \left|\left| \partial_x^{\mathrm{min}(m,2p+2)}u_0\right|\right|_{\mathbb{L}^2(\mathbb{R})}$\\
&&&\\
Backward &$\Delta t^{2\frac{\mathrm{min}(m,3)}{3}}\left|\left| \partial_x^{\mathrm{min}(m,3)}u_0\right|\right|_{\mathbb{L}^2(\mathbb{R})}$  & $\Delta t^{2\frac{\mathrm{min}(m,6p+3)}{6p+3}}\left|\left| \partial_x^{\mathrm{min}(m,6p+3)}u_0\right|\right|_{\mathbb{L}^2(\mathbb{R})}$&\\
scheme &$+\Delta x^{\frac{\mathrm{min}(m,2)}{2}} \left|\left| \partial_x^{\mathrm{min}(m,2)}u_0\right|\right|_{\mathbb{L}^2(\mathbb{R})}$&$+\Delta x^{\frac{\mathrm{min}(m,2p+2)}{2p+2}} \left|\left| \partial_x^{\mathrm{min}(m,2p+2)}u_0\right|\right|_{\mathbb{L}^2(\mathbb{R})}$&\\
&&&\\
Central & $\Delta t^{2\frac{\mathrm{min}(m,3)}{3}} \left|\left|\partial_x^{\mathrm{min}(m,3)}u_0\right|\right|_{\mathbb{L}^2(\mathbb{R})}$ & $\Delta t^{2\frac{\mathrm{min}(m,6p+3)}{6p+3}}\left|\left| \partial_x^{\mathrm{min}(m,6p+3)}u_0\right|\right|_{\mathbb{L}^2(\mathbb{R})}$&$\Delta t^{2\frac{\mathrm{min}(m,6p+3)}{6p+3}}\left|\left| \partial_x^{\mathrm{min}(m,6p+3)}u_0\right|\right|_{\mathbb{L}^2(\mathbb{R})}$\\
scheme &$+\Delta x^{2\frac{\mathrm{min}(m,3)}{3}}\left|\left|\partial_x^{\mathrm{min}(m,3)}u_0\right|\right|_{\mathbb{L}^2(\mathbb{R})}$&$+\Delta x^{2\frac{\mathrm{min}(m,2p+3)}{2p+3}}\left|\left|\partial_x^{\mathrm{min}(m,2p+3)}u_0\right|\right|_{\mathbb{L}^2(\mathbb{R})}$&$+\Delta x^{2\frac{\mathrm{min}(m,2p+3)}{2p+3}}\left|\left|\partial_x^{\mathrm{min}(m,2p+3)}u_0\right|\right|_{\mathbb{L}^2(\mathbb{R})}$\\
\hline\noalign{\smallskip}
\end{tabular}
\caption{Error estimates for $u_0\in\mathbb{H}^{m}(\mathbb{R})$}
\label{TABLE_ERROR_ESTIMATES}       
\end{table}
\end{landscape}


\begin{thebibliography}{99.}

\bibitem{Memoire_M2} Court\`es, C.: \'Etude de l'\'equation d'Airy sous contrainte (2014) Available via \url{http://www.math.u-psud.fr/~santambr/Memoire-COURTES.pdf}
\bibitem{Courtes_Lagoutiere_Rousset} Court\`es, C., Lagouti\`ere, F., Rousset, F.: Rate of convergence for finite difference scheme on the Korteweg-de Vries equation, preprint
\bibitem{Craig_Goodman_1990} Craig, W., Goodman, J.: Linear dispersive equations of Airy type. J. Diff. Eq. \textbf{87}, 38--61 (1990)
\bibitem{Demailly_Livre} Demailly, J.-P.: Analyse num\'erique et \'equations diff\'erentielles. EDP Sciences, 3e \'edition (2006)
\bibitem{Despres_2008} Despr\'es, B.: Finite volume transport schemes. Numer. Math. \textbf{108}, 529--556 (2008)
\bibitem{Lax_Richtmyer_1956} Lax, P.D., Richtmyer, R.D.: Survey of the stability of linear difference equations. Comm. Pure Appl. Math. \textbf{9}, 267--293 (1956)
\bibitem{Witham_1974} Witham, G.B.: Linear and Nonlinear Waves. Wiley, New York (1974)



\end{thebibliography}
\end{document}